\numberwithin{equation}{section}
\newtheorem{thrm}{Theorem}[section]
\newtheorem{lemma}[thrm]{Lemma}
\newtheorem{cor}[thrm]{Corollary}
\newtheorem{dfn}[thrm]{Definition}
\newtheorem{rmrk}[thrm]{Remark}
\newtheorem{exm}[thrm]{Example}
\newtheorem{conv}[thrm]{Convention}
\newcommand{\R}{\mathbb{R}}
\newcommand{\Hn}{\mathbb{H}^n}
\newcommand{\Hnn}{\mathbb{H}^{n+1}}
\newcommand{\QH}{\boldsymbol {G\,(\mathbb{H})}}
\newcommand{\abs}[1]{\lvert #1 \rvert}
\newcommand{\e}{\textbf {e}}
\newcommand{\A}{{A}}
\newcommand{\B}{{B}}
\newcommand{\C}{{C}}
\newcommand{\D}{{D}}
\newcommand{\spl}{\mathfrak{ sp}(1)}
\newcommand{\dxa}{{\partial_{x_{\alpha}}} }
\newcommand{\dta}{{\partial_{t_{\alpha}}} }
\newcommand{\dya}{{\partial_{y_{\alpha}}} }
\newcommand{\dza}{{\partial_{z_{\alpha}}} }
\newcommand{\dt}{{\partial_t} }
\newcommand{\dx}{{\partial_x}}
\newcommand{\dy}{{\partial_y} }
\newcommand{\dz}{{\partial_z} }
\begin{document}

\begin{abstract}
We describe explicitly all quaternionic contact  hypersurfaces (qc-hypersurfaces) in the flat quaternion space $\Hnn$ and the quaternion projective space. We show that up to a quaternionic affine transformation  a qc-hypersurface in $\Hnn$ is   contained in one of the three qc-hyperquadrics in $\Hnn$.  Moreover, we show that an embedded qc-hypersurface in a hyper-K\"ahler manifold is  qc-conformal to a qc-Einstein space and the Riemannian curvature tensor of the  ambient hyper-K\"ahler metric is degenerate along the hypersurface.
\end{abstract}

\keywords{quaternionic contact, hypersurfaces, hyper-K\"ahler, quaternionic projective space, 3-Sasaki}

\subjclass{58G30, 53C17}

\title[Quaternionic contact  hypersurfaces in hyper-K\"ahler manifolds]
{Quaternionic contact  hypersurfaces in hyper-K\"ahler manifolds}

\date{\today}

\author{Stefan Ivanov}
\address[Stefan Ivanov]{University of Sofia, Faculty of Mathematics and Informatics,
blvd. James Bourchier 5, 1164,
Sofia, Bulgaria}
\address{and Institute of Mathematics and Informatics, Bulgarian Academy of
Sciences}
\email{ivanovsp@fmi.uni-sofia.bg}

\author{Ivan Minchev}
\address[Ivan Minchev]{University
of Sofia, Faculty of Mathematics and Informatics, blvd. James Bourchier 5, 1164 Sofia, Bulgaria;
Department of Mathematics and Statistics, Masaryk University, Kotlarska 2, 61137 Brno,
Czech Republic}
\email{minchev@fmi.uni-sofia.bg}

\author{Dimiter Vassilev}
\address[Dimiter Vassilev]{
University of New Mexico\\
Albuquerque, NM 87131}
\email{vassilev@unm.edu}

\maketitle


\setcounter{tocdepth}{2} \tableofcontents

\section{Introduction}
 A quaternionic contact (abbr. qc) hypersurface  of a quaternionic manifold $(N,\mathcal Q)$ was defined by Duchemin \cite{D1} as a hypersurface $M$ endowed with a qc-structure compatible with the induced quaternion structure on the maximal quaternion invariant subspace $H$ of the tangent space of $M$. It was shown in \cite[Theorem 1.1]{D1}  that a qc manifold can be realized as a qc-hypersurface of an abstract quaternionic manifold. In this paper we investigate qc-hypersurfaces embedded in a hyper-K\"ahler manifold and, in particular,  qc-hypersufaces of the flat quaternion  space  $\R^{4n+4}\cong \Hnn$.

A hypersurface of a hyper-K\"ahler manifold inherits a quaternionic contact structure  from the ambient hyper-K\"ahler structure if the second fundamental form restricted to $H$ is Sp(1)-invariant and definite quadratic tensor, \cite{D1,IMV1}. Considering $\Hnn$ as a flat hyper-K\"ahler manifold, a natural  question is the embedding problem for an abstract qc-manifold.

Our first main result  describes the embedded in $\Hnn$ qc-hypersurfaces.

\begin{thrm}\label{t:embedded in Hn} If $M$ is a connected qc-hypersurface of $\R^{4n+4}\cong \Hnn$ then, up to a quaternionic affine transformation of $\Hnn$, $M$ is contained in one of the following three hyperquadrics:
$$|q_1|^2+\dots+|q_n|^2 +\R{e}(p)=0,\qquad
|q_1|^2+\dots+|q_n|^2 + |p|^2=1,\qquad |q_1|^2+\dots+|q_n|^2 - |p|^2=-1.$$

\noindent Here $(q_1,q_2,\dots q_n,p)$ denote the standard quaternionic coordinates of $\Hnn.$

In particular, if $M$ is a compact qc-hypersurface of $\R^{4n+4}\cong \Hnn$ then, up to a quaternionic affine transformation of $\Hnn$, $M$ is the standard 3-Sasakian sphere.
\end{thrm}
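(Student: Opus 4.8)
The plan is to realize $M$ as a real hypersurface in $\Hnn$ and exploit the rigidity of the compatibility condition between the induced qc-structure and the flat hyper-Kähler structure. First I would set up the local picture: near a point of $M$ choose a unit normal field $\nu$ and the shape operator $A$, and note that the qc-hypersurface condition forces the restriction of the second fundamental form to the maximal quaternionic subspace $H$ (the orthogonal complement in $TM$ of the three vectors $I_1\nu, I_2\nu, I_3\nu$, where $I_1,I_2,I_3$ are the parallel complex structures of $\Hnn$) to be a definite, $\mathrm{Sp}(1)$-invariant quadratic form, hence a nonzero multiple of the restriction of the Euclidean metric. After rescaling $\nu$ we may assume $A|_H = \lambda\,\mathrm{Id}$ for a function $\lambda$; I would then show $\lambda$ is locally constant. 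This is where the hyper-Kähler flatness enters decisively: the Codazzi equations for a hypersurface of flat space read $(\nabla_X A)Y = (\nabla_Y A)X$, and combined with the fact that the qc-structure on $H$ must be integrable in the appropriate sense (equivalently, with the structure equations of a qc-manifold established in the earlier sections) one extracts that $d\lambda$ annihilates $H$ and the reaction directions, forcing $\lambda$ constant on the connected $M$.

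With $\lambda$ constant, the three cases of the theorem correspond to $\lambda = 0$, $\lambda > 0$, $\lambda < 0$. The second step is to integrate the equation $A = \lambda\,\mathrm{Id}$ on $H$ together with the constraint that $A$ maps the "vertical" directions $I_a\nu$ appropriately: a hypersurface of Euclidean space whose shape operator is $\lambda\,\mathrm{Id}$ on a subbundle and is controlled on the complement is, by the classical Gauss–Codazzi integration (essentially the argument that umbilical hypersurfaces of $\R^N$ are spheres or hyperplanes), contained in an affine image of one of the standard quadrics $\mathrm{Re}(p) + \sum|q_i|^2 = 0$, $\sum|q_i|^2 + |p|^2 = 1$, or $\sum|q_i|^2 - |p|^2 = -1$ — the Heisenberg-type quadric, the sphere, and the hyperboloid respectively. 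I would verify that each of these three hyperquadrics does carry the qc-structure (they are the standard qc model spaces), and that the quaternionic affine group acts transitively enough on each to absorb the integration constants; this is the content of the first, already-stated part of Theorem~\ref{t:embedded in Hn}, which I may invoke directly.

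For the compactness conclusion, I would argue that compactness rules out two of the three cases: the Heisenberg quadric $\mathrm{Re}(p) + \sum|q_i|^2 = 0$ is a graph over $\R^{4n+3}$ and hence noncompact (it is unbounded in the $q$-directions), and the hyperboloid $\sum|q_i|^2 - |p|^2 = -1$ is likewise noncompact (unbounded as $|q|\to\infty$). A connected $M$ contained in the Heisenberg quadric or in one sheet of the hyperboloid, if compact, would be a compact hypersurface-without-boundary properly contained in a connected noncompact hypersurface, which is impossible unless $M$ is all of it — contradiction. Hence $M$ lies in the sphere $\sum|q_i|^2 + |p|^2 = 1$, and being a compact (hence closed) connected hypersurface inside this connected $(4n+3)$-sphere it must equal the whole sphere; the induced qc-structure is then by construction the standard one, i.e. the 3-Sasakian structure of the round sphere $S^{4n+3}$.

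The main obstacle I anticipate is the first step — proving $\lambda$ is constant and more generally pinning down the full shape operator $A$ on all of $TM$, not merely on $H$. The $\mathrm{Sp}(1)$-invariance and definiteness only control $A|_H$; the behavior of $A$ on the vertical distribution $\mathrm{span}\{I_1\nu,I_2\nu,I_3\nu\}$ and the mixed terms must be squeezed out of the Codazzi equations together with the qc integrability conditions, and organizing that computation cleanly (rather than as a brute-force index chase) is the technical heart of the argument. Once $A$ is completely determined and $\lambda$ is constant, the classification into the three quadrics and the compactness dichotomy are comparatively routine.
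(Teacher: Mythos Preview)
Your argument has a genuine gap at the very first step. You assert that since $II|_H$ is a definite $\mathrm{Sp}(1)$-invariant quadratic form, it is ``hence a nonzero multiple of the restriction of the Euclidean metric,'' and so $A|_H=\lambda\,\mathrm{Id}$. This inference is false: $\mathrm{Sp}(1)$-invariance only means $II(I_sX,I_sY)=II(X,Y)$, i.e.\ $II|_H$ is a quaternion-Hermitian form, and the space of such forms on $\mathbb{H}^n$ is parametrized by Hermitian $n\times n$ quaternionic matrices, not by a single scalar. An ellipsoid $\sum_a c_a|q_a|^2=1$ with distinct $c_a>0$ is already a qc-hypersurface whose second fundamental form on $H$ is not a multiple of the ambient Euclidean metric. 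Everything downstream---the Codazzi argument that $\lambda$ is constant, and the trichotomy $\lambda=0$, $\lambda>0$, $\lambda<0$---rests on this mistaken normalization. (Note also that $II|_H$ is \emph{definite} by hypothesis, so a ``$\lambda=0$'' case could never arise; the Heisenberg quadric does not correspond to a vanishing shape operator on $H$.)

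The paper's proof addresses exactly this point, and it is the heart of the matter. One first proves (Theorem~\ref{t:surface1}) that there is a \emph{parallel} $J_s$-invariant symmetric bilinear form $\Delta$ on $TK|_M$ with $\Delta|_{TM}$ proportional to $II$; the existence of $\Delta$ is obtained by passing to a volume-normalized (``calibrated'') qc-Einstein representative in the conformal class and analyzing the structure equations. In flat $\mathbb{H}^{n+1}$ a parallel bilinear form is constant, so $\Delta$ is (the pullback of) a fixed quaternion-Hermitian form on $\mathbb{H}^{n+1}$, and one may now use the $GL(n+1,\mathbb{H})$ freedom---which is precisely the quaternionic affine transformation in the statement---to diagonalize $\Delta$. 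The trichotomy in the theorem is then the signature of $\Delta$ (positive definite, signature $(4n,4)$, or degenerate of rank $4n$), not the sign of a scalar $\lambda$. Your umbilical intuition is morally right only \emph{after} this diagonalization, and only in the two nondegenerate cases; the degenerate (Heisenberg) case requires a separate integration. The compactness paragraph at the end is fine, but it is not where the difficulty lies.
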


More generally,  considering  qc-hypersurfaces of a hyper-K\"ahler manifold, we show  that any such qc-hypersurface is  qc-conformally  equivalent to a qc-Einstein manifold, i.e., the qc-conformal class of any embedded qc-structure contains one for which the horizontal Ricci tensor of the associated Biquard connection is proportional to the metric on the horizontal bundle. We note that qc-Einstein spaces are locally qc-homothetic to a certain $SO(3)$-bundles over  a quaternionic K\"ahler manifold of either positive scalar curvature (3-Sasakian spaces), or non-positive scalar curvature \cite{IMV1,IV2,IMV4}.

\begin{thrm}\label{t:qc-embedded in hyperkahler}
If $M$ is a qc-manifold embedded as a hypersurface in a
hyper-K\"ahler manifold, then $M$ is qc-conformal to a qc-Einstein structure.
\end{thrm}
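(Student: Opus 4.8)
The plan is to work with the second fundamental form of the embedding and translate the Gauss-Codazzi-type equations for a hypersurface in a hyper-K\"ahler manifold into statements about the Biquard connection of the induced qc-structure. First I would set up the standard framework: let $\nu$ be a unit normal to $M$ and $T$ the (co)oriented Reeb-like direction obtained from $\nu$ via the three complex structures $I_1,I_2,I_3$ of the ambient hyper-K\"ahler metric $g$, so that the vertical distribution $V$ of the qc-structure is spanned by $I_1\nu, I_2\nu, I_3\nu$ along $M$ and $H=(V\oplus\R\nu)^\perp$. Because $g$ is hyper-K\"ahler, $\nabla^g I_s=0$, so the shape operator $A$ (with $\nabla^g_X\nu = -AX$) is the only new datum, and the hypothesis that $M$ is a qc-hypersurface says precisely that $A|_H$ is $Sp(1)$-invariant (commutes with $I_1,I_2,I_3$ on $H$) and positive definite. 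The induced horizontal metric is, up to scale, $A|_H$ itself restricted appropriately; the key is that $Sp(1)$-invariance forces $A|_H = \mu\,\mathrm{Id}_H$ pointwise for some positive function $\mu$ on $M$ — a quaternion-linear symmetric endomorphism of a quaternionic vector space with $n\ge1$ is a real scalar. This is the structural fact that makes everything work.

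Next I would compute the Biquard connection $\nabla$ of the induced qc-structure in terms of $\nabla^g$ and $A$. The horizontal covariant derivative $\nabla_X Y$ for $X,Y\in H$ differs from $\nabla^g_X Y$ by terms involving $A$ (the $\R\nu$- and $V$-components, which come from $g(\nabla^g_X Y,\nu)=g(AX,Y)$ and from $g(\nabla^g_X Y, I_s\nu)$, the latter expressible via $\nabla^g I_s=0$ and $A$). The curvature of $\nabla$ then decomposes, via the Gauss equation for the flat-normal-bundle-twisted connection, into (i) the ambient Riemann curvature $R^g$ evaluated on horizontal vectors, plus (ii) quadratic expressions in $A$, i.e. in $\mu$ and its horizontal derivatives. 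The horizontal Ricci/qc-scalar pieces of $\nabla$ therefore split as a curvature term from the ambient manifold and a Hessian-of-$\mu$ term. The point of the qc-conformal rescaling is to kill the ``curved'' part: rescaling the qc-contact form by $\mu$ (or a suitable power of it) produces a new qc-structure whose Biquard connection has horizontal Ricci tensor equal to the ambient-curvature contribution contracted onto $H$ — and on a hyper-K\"ahler manifold that contracted tensor is automatically a multiple of the metric, because the relevant component of $R^g$ that survives on the quaternionic-invariant subspace $H$ is forced by the holonomy reduction $\mathrm{Hol}(g)\subset Sp(n+1)$ together with the $Sp(1)$-invariance of $A$ to be pure trace. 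Equivalently, I would verify the vanishing of the two qc-torsion invariants $T^0$ and $U$ of the rescaled structure, which by the qc-Einstein characterization (the cited results \cite{IMV1,IV2}) is exactly the condition of being qc-Einstein; this simultaneously yields the degeneracy-of-ambient-curvature-along-$M$ claim mentioned in the abstract.

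Concretely the order of steps is: (1) fix $\nu$, $T$, $V$, $H$, and record $\nabla^g I_s=0$; (2) prove $A|_H=\mu\,\mathrm{Id}_H$ from $Sp(1)$-invariance; (3) express the Biquard connection $\nabla$ and its torsion endomorphism in terms of $\nabla^g$ and $d\mu$; (4) compute $\nabla$-curvature via Gauss-Codazzi, isolating horizontal Ricci $=$ (ambient term) $+$ ($\nabla d\mu$-term); (5) perform the qc-conformal transformation $\eta\mapsto \mu^{-1}\eta$ (matching the normalization in \cite{IMV1}) and show the transformed horizontal Ricci is proportional to the horizontal metric, using that the ambient term is pure trace on $H$ by the $Sp(n+1)$-holonomy constraint. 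The main obstacle I anticipate is step (4) together with the last part of (5): carefully matching the combinatorics of the qc-conformal change of the Biquard connection (which is not the Levi-Civita conformal change — it has extra vertical and torsion corrections, cf. \cite{IMV1}) against the Hessian term produced by $A$, so that the two cancel and leave only the trace piece. Getting the $Sp(1)$-equivariance bookkeeping right across all three almost complex structures simultaneously, so that no anti-self-dual component of $R^g$ sneaks into the horizontal Ricci, is the delicate point; everything else is a bounded tensorial computation.
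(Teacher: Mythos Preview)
Your step (2) contains a fatal error for $n\ge 2$. The claim that ``a quaternion-linear symmetric endomorphism of a quaternionic vector space is a real scalar'' holds only when $n=1$: in general the real-symmetric endomorphisms of $\mathbb{H}^n$ commuting with $I_1,I_2,I_3$ are exactly the quaternion-Hermitian $n\times n$ matrices, a space of real dimension $n(2n-1)$. For a concrete counterexample, apply a non-conformal element of $GL(n+1,\mathbb{H})$ to the round sphere in $\mathbb{H}^{n+1}$; the image is still a qc-hypersurface (the quaternionic affine group preserves this class), but its shape operator on $H$ is not a scalar multiple of the identity with respect to the ambient flat metric. Hence there is no single function $\mu$ to feed into your steps (3)--(5), and the proposed conformal factor is undefined. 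Since you label this ``the structural fact that makes everything work,'' the rest of the plan has nothing to stand on.

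The paper produces the conformal factor by a genuinely different mechanism: a \emph{volume normalization}. One compares the restriction to $H$ of the $n$-th power of the ambient holomorphic symplectic form $\Gamma_s = G(J_j\cdot,\cdot)+\sqrt{-1}\,G(J_k\cdot,\cdot)$ with the $n$-th power of the qc-fundamental form $\hat\gamma_s$; both lie in the one-dimensional space $\Lambda^{2n}(H^*_{1,0})$, so there is a positive function $\mu$ with $\Gamma_s^n|_H = \mu\,\hat\gamma_s^n|_H$, and one sets $f=\mu^{1/(n+2)}$. The qc-Einstein property of $\eta_s=f\hat\eta_s$ is then established not through Gauss--Codazzi and the Biquard Ricci tensor, but by differentiating an identity of the shape $\eta_2\wedge\Gamma_1^{n+1} = (n+1)\,\eta_1\wedge\eta_2\wedge\eta_3\wedge\gamma_1^n$, using $d\Gamma_s=0$ (this is where the hyper-K\"ahler hypothesis enters), and reading off from the resulting top-degree form equation that the $\mathfrak{sp}(1)$-connection forms $\alpha_s$ vanish on $H$---one of the known characterizations of qc-Einstein. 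If you want to salvage a curvature-based approach, the first repair is to replace the nonexistent scalar $\mu$ by this volume ratio.
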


We obtain our second main result in the course of the proof of a stronger result, cf. Theorem \ref{t:surface1} and Lemma \ref{l:step1}

We also find necessary conditions for the existence of a qc-hypersurface in a hyper-K\"ahler manifold, namely the Riemannian curvature $R$ of the ambient space has to be degenerate along the normal to the qc-hypersurface vector field, see Theorem~\ref{surface2}.  From this point of view the "richest" ambient space  is the flat space $\mathbb H^{n+1}\cong\R^{4n+1}$ in which case Theorem~\ref{t:embedded in Hn} provides a complete description.

Our approach to the considered problems is partially motivated by  \cite[Corollary B]{L1} who showed that a non-degenerate CR manifold embedded as a hypersurface in $\mathbb{C}^{n+1}$, $n\geq2$, admits a pseudo-Einstein structure, i.e., there is a contact form for which the pseudo-hermitian Ricci tensor of the Tanaka-Webster connection is proportional to the Levi form. A key insight of \cite[Theorem 4.2]{L1} is  that a contact form $\theta$ defines a pseudo-Hermitian structure  which is pseudo-Einstein iff locally there exists a closed section of the canonical bundle with respect to which $\theta$ is volume-normalized. In the considered here quaternionic setting, we show the existence of a "calibrated"  qc-structure which is volume normalizing in a certain sense, see Lemma \ref{mu} and \eqref{def:f1}.

\begin{conv}\label{conven}

Throughout the paper, unless explicitly stated otherwise, we will
use the following notation.

\begin{enumerate}[a)]\label{e:notation}
\item The triple $(i,j,k)$ denotes any cyclic permutation of
$(1,2,3)$.
\item $s,t$ are any numbers from the set $\{1,2,3\}$, $s,t \in \{1,2,3\}$.
\item For a given decomposition $TM=V\oplus H$ we denote by
$[.]_V$ and $[.]_H$ the corresponding projections to $V$ and $H$.
\item ${\A}, {\B}, {\C}$, etc. will denote
sections of the tangent bundle of $M$,  ${\A},
{\B}, {\C}\in  TM$.
\item $X,Y,Z,U$ will denote horizontal vector fields,
$X,Y,Z,U\in H$.
\end{enumerate}
\end{conv}

\section{Preliminaries}
\subsection{QC-manifolds}
It is well known that the sphere at infinity of a  non-compact symmetric space $M$ of rank one carries a natural
Carnot-Carath\'eodory structure, see \cite{M,P}.  Quaternionic contact (qc) structures were introduced by O. Biquard \cite{Biq1} modeling  the conformal boundary at infinity of the quaternionic hyperbolic space. Biquard showed that the infinite dimensional family \cite{LeB91} of complete quaternionic-K\"ahler deformations of the quaternion hyperbolic metric have conformal infinities which provide an infinite dimensional family of examples of qc structures. Conversely, according to \cite{Biq1} every real analytic qc structure on a manifold $M$ of dimension at least eleven is the conformal infinity of a unique quaternionic-K\"ahler metric defined in a neighborhood of $M$.

 We refer to \cite{Biq1}, \cite{IMV1} and \cite{IV3} for a more detailed exposition of the definitions and properties of qc structures and the associated Biquard connection. Here, we recall briefly the relevant facts needed for this paper. A quaternionic contact (qc) manifold is a $4n+3$%
-dimensional manifold  $M$ with a codimension three distribution $H$  equipped with  an $Sp(n)Sp(1)$ structure locally defined by an $\mathbb{R}^3$-valued 1-form $\eta=(\eta_1,\eta_2,\eta_3)$. Thus, $H=\cap_{s=1}^3 Ker\, \eta_s$
carries a positive definite symmetric tensor $g$, called the horizontal metric, and a compatible rank-three bundle $\mathbb{Q}$
consisting of endomorphisms of $H$ locally generated by three orthogonal almost complex
structures $I_s$,  satisfying the unit quaternion relations: (i) $I_1I_2=-I_2I_1=I_3, \quad $ $I_1I_2I_3=-id_{|_H}$; \hskip.1in (ii) $g(I_s.,I_s.)=g(.,.)$; and \hskip.1in  (iii) the
compatibility conditions  $2g(I_sX,Y)\ =\ d\eta_s(X,Y)$, $
X,Y\in H$  hold true.

The transformations preserving a given quaternionic contact
structure $\eta$, i.e., $\bar\eta=\mu\Psi\eta$ for a positive smooth
function $\mu$ and an $SO(3)$ matrix $\Psi$ with smooth functions as
entries are called \emph{quaternionic contact conformal (qc-conformal) transformations}.  The qc-conformal curvature tensor $W^{qc}$, introduced in \cite{IV1}, is the
obstruction for a qc structure to be locally qc-conformal to the
standard 3-Sasakian structure on the $(4n+3)$-dimensional sphere \cite{IV1,IV3}. It is a noteworthy and well known fact that, unlike the CR geometry, in the qc case the horizontal space determines uniquely the qc-conformal class, see Lemma \ref{l:admis-lemma}.

As shown in \cite{Biq1} there is a "canonical" connection associated to every qc manifold of dimension at least eleven. In the seven dimensional case the existence of such a connection requires the qc-structure to be integrable \cite{D}. The integrability condition is equivalent to the existence of Reeb vector fields \cite{D}, which (locally) generate the supplementary to $H$ distribution $V$. The Reeb vector fields $%
\{\xi_1,\xi_2,\xi_3\}$ are determined by \cite{Biq1}
\begin{equation}  \label{bi1}
 \eta_s(\xi_t)=\delta_{st}, \qquad (\xi_s\lrcorner
d\eta_s)_{|H}=0,\quad (\xi_s\lrcorner d\eta_t)_{|H}=-(\xi_t\lrcorner
d\eta_s)_{|H},
\end{equation}
where $\lrcorner$ denotes the interior multiplication.  Henceforth, by a qc structure in dimension $7$ we shall mean a qc structure satisfying \eqref{bi1} and refer to the "canonical" connection as \emph{the Biquard connection}. The Biquard connection is the unique linear connection preserving the decomposition $TM=H\oplus V$ and the $Sp(n)Sp(1)$ structure on $H$ with torsion $T$ determined by $T(X,Y)=-[X,Y]_{|_V}$ while  the endomorphisms $T({\xi_s},.): H \rightarrow H$ belong to the orthogonal complement $(sp(n)+sp(1))^{\perp}\subset GL(4n,R)$.

The covariant derivatives with respect to the Biquard connection of
the endomorphisms $I_s$  and the Reeb vector fields are given by
\begin{equation*}
\nabla I_i=-\alpha_j\otimes I_k+\alpha_k\otimes I_j,\qquad
\nabla\xi_i=-\alpha_j\otimes\xi_k+\alpha_k\otimes\xi_j.
\end{equation*}
 The $\spl$-connection 1-forms  $\alpha_1,\alpha_2, \alpha_3$, defined by the
above equations satisfy \cite{Biq1}
\begin{equation*}
\alpha_i(X)=d\eta_k(\xi_j,X)=-d\eta_j(\xi_k,X),\qquad X\in H.
\end{equation*}

 Let $R=[\nabla,\nabla]-\nabla_{[.,.]}$ be the curvature tensor of $\nabla$ and
$R(\A,\B,\C,\D)=g(R_{\A,\B}\C,\D)$ be the corresponding curvature tensor of type (0,4). The qc Ricci tensor $Ric$ and the normalized qc scalar curvature $S$ are defined by
\begin{equation}\nonumber
Ric(\A,\B)=\sum_{a=1}^{4n}R(e_a,\A,\B,e_a)\qquad 8n(n+2)S=Scal=\sum_{a=1}^{4n}Ric(e_a,e_a),
\end{equation}
where $e_1,\dots,\e_{4n}$ of $H$ is an $g$-orthonormal frame on $H$.

We say that $(M,\eta)$ is a qc-Einstein manifold if the restriction of the qc-Ricci tensor to the horizontal space $H$ is  trace-free, i.e.,
$$Ric(X,Y)=\frac{Scal}{4n}g(X,Y)=2(n+2)Sg(X,Y), \quad X,Y\in H.$$
The qc-Einstein condition is equivalent to the vanishing of the torsion endomorphism of the Biquard connection, $T(\xi_s,X)=0$ \cite{IMV1}. It is also known \cite{IMV1,IMV4}  that the qc-scalar curvature of a qc Einstein manifold is  constant.

The structure equations of a qc manifold \cite[Theorem 1.1]{IV2} are given by
\begin{equation}\label{streq}
\begin{aligned}
d\eta_i  =2\omega_i-\eta_j\wedge\alpha_k+\eta_k\wedge\alpha_j -
S
\eta_j\wedge\eta_k,
\end{aligned}
\end{equation}
where $\omega_s$ are the fundamental 2-forms defined by the equations
\begin{equation*}  
2\omega_{s|H}\ =\ \, d\eta_{s|H},\qquad
\xi_t\lrcorner\omega_s=0.
\end{equation*}
 By \cite[Theorem 5.1]{IMV4}, see also \cite{IV2} and \cite[Theorem 4.4.4]{IV3} for alternative proofs in the case $Scal\not=0$, a qc-Einstein structure is characterised by either of the following equivalent conditions
\begin{enumerate}[i)]
\item  locally, the given qc-structure is defined by  1-form $(\eta_1,\eta_2,\eta_3)$  such that for some constant $S$ we have
\begin{equation}\label{str_eq_mod}
d\eta_i=2\omega_i+S\eta_j\wedge\eta_k;
\end{equation}
\item locally,  the given qc-structure is defined by a 1-form $(\eta_1,\eta_2,\eta_3)$  such that the corresponding connection 1-forms vanish on $H$ and (cf. the proof of Lemma 4.18 of \cite{IMV1})
\begin{equation}\label{e:vanishing alphas qc-Einstein}
\alpha_s=-S\eta_s.
\end{equation}
\end{enumerate}

\subsection{QC-hypersurfaces}\label{ss:qc-hypersurfaces}
Let $M$ be an oriented real hypersurface in a hyper-K\"ahler  manifold
$K$ with parallel quaternionic bundle $\mathcal{Q}$ and   maximal $\mathcal{Q}$-invariant subspace $H\subset TK$ called the horizontal space.  The
hypersurface $M$ is a qc-hypersurface of $K$ if it is a qc
manifold with respect to the induced  on $H$ quaternionic structure. We note that the induced qc-structure on $M$ is generated by  globally defined 1-forms $\hat{\eta_s}$ determined by the unit normal $N$ to $M$, see \eqref{e:defn of hat-eta} below.  Formally, we rely on the following definition \cite[Proposition 2.1]{D1}.
\begin{dfn}\label{d:QRhypersurface}
Let $K$ be a hyper-K\"ahler manifold with hyper-complex structure
$(J_1,J_2,J_3)$ and hyper-K\"ahler metric  $G$. Let $(M,H,\mathcal{Q})$ be a qc-manifold, and $\iota :M\rightarrow K$ an embedding. We say that $M$ is a qc-embedded hypersurface of $K$ if  $\iota_*(H)$ is a codimension four subbundle of $TK$ which is a $J_s$-invariant subspace of $TM$ for
$s\in\{1,2,3\}$ and the restrictions of $J_1,J_2,J_3$ to the subspace $\iota_*(H)$ are  elements of the induced by $\mathcal{Q}$  quaternionic structure on $\iota_*(H)$.
\end{dfn}
In order to simplify the notation we will
identify the corresponding points and tensor fields on $M$ with their images
through the map $\iota$ in $K$.

We note explicitly that the above definition determines the conformal class of the given qc structure rather than a particular qc structure inside this conformal class, cf. Lemma \ref{l:admis-lemma}.  An equivalent characterization of a qc-hypersurface $M$ is  that the
restriction of the second fundamental form of $M$ to the horizontal space is a definite
symmetric form, which is invariant with respect to the quaternion structure, see \cite[Proposition~2.1]{D1}.  After choosing the unit normal vector $N$  to $M$ appropriately we can \emph{and will} assume that the second fundamental form of $M$ is negative definite on  the horizontal space.

\begin{rmrk} For practical purposes, it is useful to keep in mind the description through a locally defining function $\rho$,  $M=\rho^{-1}(0)$ with non-vanishing differential $d\rho$. By \cite[Proposition~2.1]{D1}, $M$ is a qc-hypersurface iff   pointwise $D d\rho(X,Y)$ is either positive or negative definite and $\mathcal Q$ invariant quadratic form on the maximal $\mathcal Q$-invariant subspace $H$ of $TM$. Furthermore, instead of the Levi-Civita connection $D$ one can take any  torsion free connection on $K$
preserving the quaternion bundle of $K$.
\end{rmrk}

With $|.|$ denoting the length of a tensor determined by the metric $G$, consider
\begin{equation}\label{e:defn of hat-eta}
\hat\eta_s(\A)= G(J_sN,\A)=\frac {1}{|d\rho|}J_s d\rho,\ \A\in TM,\  \
s=1,2,3,
\end{equation}
so that $H=\cap_{s=1}^3 Ker\, \hat\eta_s$. Since the complex structures $J_s$ are parallel
with respect to the Levi-Civita connection $D$ it follows
\begin{gather}\label{e:d hat eta}
d\hat\eta_s(\A,\B)=(D_\A\hat\eta_s)(\B)-(D_{\B}\hat\eta_s)(\A)=
-G(J_s(D_\A N),\B) + G(J_s(D_{\B}N),\A)=\\\nonumber
=II(\A,
[J_s\B]_{TM})- II(\B,[J_s\A]_{TM}), \qquad
\A,\B\in TM.
\end{gather}
Defining $\hat g(X,Y)=-II(X,Y),\ X,Y\in H$, \eqref{e:d hat eta} yields
$d\hat\eta(X,Y)=2g(I_sX,Y)$, which defines a qc-structure $(M,\hat\eta_s,I_s,\hat g)$  in the qc-conformal class determined by the qc-embedding.

The associated Reeb vector fields $\hat\xi_s$,
fundamental 2-forms $\hat\omega_s$, and $\spl$-connection 1-forms $\hat\alpha_s$ are determined easily as follows. For $\hat r_s=\hat\xi_s - J_sN$, since
$\hat\eta_t(\hat r_s) =0$  we have $\hat r_s\in H$.
Using the equation $d\hat\eta_s(\hat\xi_s,X)=0,\ X\in H$ and
\eqref{e:d hat eta} we obtain
\begin{equation*} 
2II(\hat r_i, X) =- II(J_i N, X).
\end{equation*}
In addition, we have
\begin{multline*}
\hat\alpha_i (X)=d\hat\eta_k(\hat r_j, X)+d\hat\eta_k( J_j N, X)=2II(\hat r_j, I_k X) +d\hat\eta_k( J_j N, X)\\
                 =2II(\hat r_j, I_k X) + II(J_j N, I_k X)+ II(X,J_i N)\\
                 =- II(J_j N, I_k X) + II(J_j N, I_k X)+ II(X,J_i N)
                 =II(J_i N, X).
                 \end{multline*}
Notice that,  unless the three 1-forms $II(J_sN,.)$ vanish on $H$,  the qc-structure $(\hat\eta_s,I_s,\hat g)$ does not satisfy the structure
equations
$
d\hat\eta_i=2\hat\omega_i+\hat S\hat\eta_j\wedge\hat\eta_k,
$
(cf. formula~\ref{streq}), and the vector fields $J_sN$ differ from the Reeb vector fields $\hat\xi_s.$

\section{QC-hypersurfaces of hyper-K\"ahler manifolds}\label{S:qc in hyperkahler}

Let $M$ be a qc-hypersurface of the hyper-K\"ahler manifold $K$ as in subsection \ref{ss:qc-hypersurfaces}. Summarizing the notation of \ref{ss:qc-hypersurfaces} we have that the defining tensors of the embedded qc-structure on $M$ are assumed to be given by
\begin{equation}\label{e:def of hat str}
\hat\eta_s (A)= G(J_sN,A), \quad \hat \xi_s=J_s N +\hat r_s, \quad \hat\omega_s(X,Y)=-II(I_sX,Y), \quad \hat g(X,Y)= \hat\omega_s(I_sX,Y).
\end{equation}

Notice that Theorem \ref{t:qc-embedded in hyperkahler} claims that the qc-conformal class of any embedded  qc-hypersurface in a hyper-K\"ahler manifold contains a qc-Einstein structure. In turn, this follows from the following stronger result.
\begin{thrm}\label{t:surface1}
Let $\iota:M\rightarrow K$ be an oriented qc-hypersurface of a hyper-K\"ahler
manifold $K$ with parallel quaternion structures $J_s$, $s\in\{1,2,3\}$, and hyper-K\"ahler metric $G$. There exists a unique up to a multiplicative constant symmetric $J_s$-invariant bilinear form $\Delta$ on the pull-back bundle $TK|_M\overset{def}{=}\iota^*(TK)\rightarrow
M$ such that $\Delta$ is parallel with respect to the pull-back of the Levi-Civita connection and whose restriction to $TM$ is proportional to the second fundamental form of $M$. Furthermore, the restriction of $\Delta$ to $H$ is the horizontal metric of a qc-Einstein structure in the qc-conformal class defined by the (second fundamental form of the) qc-embedding.
\end{thrm}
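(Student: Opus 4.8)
The plan is to construct $\Delta$ explicitly from the normal vector field $N$ along $M$ and then verify the three claimed properties — parallelism, proportionality of its restriction to $TM$ with the second fundamental form, and the qc-Einstein property of its restriction to $H$. Since $K$ is hyper-K\"ahler, the four vector fields $N, J_1N, J_2N, J_3N$ span a $\mathcal Q$-invariant rank-four subbundle $\mathcal N$ of $TK|_M$ complementary to $H$, and one has the $G$-orthogonal splitting $TK|_M = H \oplus \mathcal N$. The natural candidate is to let $\Delta$ be the bilinear form obtained by declaring $\Delta = c\,(\text{something on }H) \oplus (\text{something on }\mathcal N)$; concretely, I expect $\Delta$ to be (up to scale) of the form $\Delta = \hat g \text{ extended by } \mathcal Q\text{-invariance on }H$, together with a multiple of $G$ on the normal bundle $\mathcal N$ chosen so that the whole tensor becomes $D$-parallel. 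Equivalently, since $\Delta$ must be $J_s$-invariant and its restriction to the line $\mathbb{R}N$ then determines its restriction to all of $\mathcal N$, the construction reduces to finding a single function $f$ along $M$ with $\Delta(N,N) = f$ and $\Delta|_H$ a suitable conformal rescaling of $II|_H$, subject to $D\Delta = 0$.

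The key computation is the parallelism condition. Writing $D$ for the pull-back Levi-Civita connection on $TK|_M$ and decomposing covariant derivatives into horizontal, vertical ($V$), and transversal ($N$-direction) parts, the equation $(D_A\Delta)(B,C) = 0$ for all $A \in TM$, $B,C \in TK|_M$ splits into components. Because the $J_s$ are $D$-parallel and $\Delta$ is forced to be $J_s$-invariant, many of these components are automatically consistent; the substantive ones are those involving the second fundamental form $II$, via $D_A N = -S_{II}(A)$ where $S_{II}$ is the shape operator. I would use equation~\eqref{e:d hat eta} and the formulas in \eqref{e:def of hat str} to express everything in terms of $II$ and its horizontal restriction $\hat g = -II|_H$, and then read off a first-order PDE (really an overdetermined ODE-type system along $M$) for the conformal factor relating $\Delta|_H$ to $\hat g$. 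Solving this — showing the system is consistent and has a one-parameter family of solutions — is where the real content lies, and I expect this to be the main obstacle: one must check that the integrability conditions coming from the curvature of $K$ do not obstruct the existence of $\Delta$, which is presumably exactly the point where the hyper-K\"ahler hypothesis (flat quaternionic bundle, $D J_s = 0$) is essential and where a weaker quaternionic-K\"ahler hypothesis would fail.

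Granting existence and uniqueness up to scale of the parallel $J_s$-invariant $\Delta$ with $\Delta|_{TM} \propto II|_{TM}$, the final assertion — that $\Delta|_H$ is the horizontal metric of a qc-Einstein structure in the embedded conformal class — should follow by identifying the conformal factor. By Lemma~\ref{l:admis-lemma} the horizontal space determines the qc-conformal class, so $\Delta|_H$, being a positive multiple of $\hat g$ on $H$, defines a qc-structure $\bar\eta_s$ in the embedded class; call its conformal factor $\phi$, so $\bar g = \phi^{-1}\,\hat g$ or similar. I would then compute the $\spl$-connection 1-forms $\bar\alpha_s$ of this rescaled structure using the transformation rules for qc-conformal changes together with the already-derived formula $\hat\alpha_s(X) = II(J_sN,X)$, and show that the parallelism of $\Delta$ forces $\bar\alpha_s = -\bar S\,\bar\eta_s$ on $H$ for the appropriate scalar $\bar S$. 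By characterization~(ii) of qc-Einstein structures (equation~\eqref{e:vanishing alphas qc-Einstein}), this is precisely the qc-Einstein condition, completing the proof. The uniqueness up to constant of $\Delta$ then matches the known fact that a qc-Einstein representative in a given conformal class is unique up to qc-homothety.
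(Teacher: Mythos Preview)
Your proposal has the right instincts but contains a structural error in the ansatz and, more importantly, lacks the key idea that makes the argument go through.

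First, the ansatz is wrong: you assume $\Delta$ is block-diagonal with respect to the $G$-orthogonal splitting $TK|_M = H \oplus \mathcal N$, writing it as ``(something on $H$) $\oplus$ (something on $\mathcal N$)'' and reducing to a single function $\Delta(N,N)=f$. But in the paper's construction $\Delta(N,X)=df(X)$ for $X\in H$ (see \eqref{df}), which is generically nonzero. The bilinear form $\Delta$ is block-diagonal only with respect to a \emph{different} splitting, namely $H\oplus V\oplus \mathbb R\xi$ built from the calibrated transversal $\xi=f^{-1}N+r$ and the Reeb fields $\xi_s=J_s\xi$ of the \emph{rescaled} qc-structure --- a splitting you cannot write down until you already know $f$. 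So the ``reduce to a single unknown function and solve a first-order PDE'' scheme is not as clean as you suggest; you would need to carry the cross terms and the unknown vector $r$ as well.

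Second, and more seriously, you correctly identify the integrability of the overdetermined system for the conformal factor as ``the main obstacle,'' but you give no mechanism for solving it. The paper does not attack this head-on. Instead it \emph{reverses} your order of logic: it first \emph{produces} the correct $f$ by a volume-normalization trick (Lemma~\ref{mu} and \eqref{def:f1}), namely $f=\mu^{1/(n+2)}$ where $\mu$ is determined by $\Gamma_s^n|_H=\mu\,\hat\gamma_s^n|_H$; then it proves directly (Lemma~\ref{l:step1}) that the rescaled structure $\eta_s=f\hat\eta_s$ is qc-Einstein, by differentiating identities like \eqref{basic eq} and exploiting the closedness of the hyper-K\"ahler $2$-forms $\Gamma_s$; only \emph{after} the qc-Einstein structure equations \eqref{str_eq_mod} are in hand does it define $\Delta$ by \eqref{delta-def} and verify parallelism. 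The parallelism proof is then short, because the qc-Einstein equations give $\Delta_i=\tfrac12 d\eta_i$ on $TM$, so the associated $2$-forms are closed, and the algebraic identities \eqref{D-delta-trivial}--\eqref{D-delta} finish the job. In your route, you would have to establish the integrability conditions for $D\Delta=0$ from scratch, which amounts to re-deriving the qc-Einstein property without the volume-normalization shortcut --- and you have not indicated how to do that.
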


We note that the existence is the main difficulty in the above result, since the uniqueness up to a multiplicative constant is trivial. Indeed, if $\Delta_1$ and $\Delta_2$ are two such forms, then from $\Delta_1{|_{TM}}=e^{2\phi}\Delta_2{|_{TM}}$ for some function $\phi$ on $M$, the $J_s$-invariance implies the same relation  on $TK|_M$. Therefore, $d\phi(A)=0$ for any $A\in TM$ since the bilinear forms are parallel.

Before we turn to the proof of Theorem \ref{t:surface1}, we give an example of the above construction and Theorem \ref{t:surface1}  by considering the standard embedding of the Heisenberg group in the $n+1$-dimensional quaternion space.
\begin{exm}\label{e:Heisenberg embed} An embedding of the quaternionic Heisenberg group $\QH$, see \cite[Section 5.2]{IMV1}.
\end{exm}
\noindent Let us identify $\QH$ with the boundary $\Sigma$ of a Siegel domain in
$\Hn\times\mathbb{H}$, $
\Sigma\ =\ \{ (q',p')\in \Hn\times\mathbb{H}\ :\ \Re {\ p'}\ =\ -\,\abs{q'}^2 \},
$
by using the map $\iota\left ((q', \omega')\right ) \ =\  (q',-\,\abs{q'}^2 +\omega')=(q,p)\in  \Hn\times\mathbb{H},$ where $p=t+\omega=t+ix+jy+kz\in \mathbb{H}$, $q=(q_1,\dots, q_n)\in \Hn$, and $q_\alpha=t_\alpha +ix_\alpha +jy_\alpha +kz_\alpha \in \mathbb{H}$, $\alpha=1,\dots,n$. The "standard" contact
form on $\QH$, written as a purely imaginary quaternion valued form,  is  given by
\begin{equation}\label{e:stand contact form on H}
\begin{aligned}
\tilde{\Theta} & = \frac 12\ (-d\omega   +  d\bar q\, \cdot q -  q \cdot d\bar q)=
 i\left (- \frac 12 dx -  t_\alpha dx_\alpha +  x_\alpha dt_\alpha + y_\alpha dz_\alpha - z_\alpha dy_\alpha \right)\\
& +j\left (-\frac 12 dy -  t_\alpha dy_\alpha -  x_\alpha dz_\alpha + y_\alpha dt_\alpha + z_\alpha dx_\alpha \right) +  k\left (-\frac 12 dz -  t_\alpha dz_\alpha +  x_\alpha dy_\alpha - y_\alpha dx_\alpha + z_\alpha dt_\alpha\right),
\end{aligned}
\end{equation}
where $\cdot$ denotes the quaternion multiplication. We note that
the complex structures $J_1, \, J_2, \, J_3$ are, respectively, the  multiplication on the \emph{right} by $-i, \, -j,\, -k $ in $\mathbb{H}^{n+1}$, hence
\begin{equation*}
\begin{aligned}
& J_1 dt_\alpha =-dx_\alpha, \quad J_1 dy_\alpha = dz_\alpha, \quad J_1 dt =-dx,\quad J_1 dy = dz,\\
& J_2dt_\alpha = -dy_\alpha, \quad J_2dz_\alpha= dx_\alpha,\quad J_2dt = -dy, \quad J_2dz= dx.\
\end{aligned}
\end{equation*}
Clearly, $\Sigma$ is the $0$-level set of $\rho= \abs{q}^2 +t$ and we have
\begin{equation*}
\begin{aligned}
& J_sd\rho= \sqrt{1+4|q|^2}\, \hat\eta_s,\quad
 N=\frac {2}{\sqrt{1+4|q|^2}}\left (\frac 12\dt +\  t_\alpha\dta + \ x_\alpha\dxa +\ y_\alpha\dya +\ z_\alpha\dza \right ),\\
 \hat\eta & = i\hat\eta_1+j\hat\eta_2+ k\hat\eta_3 =\frac {1}{\sqrt{1+4|q|^2}}\ (-d\omega \ + \ d\bar q\, \cdot q \ - \ \bar q \cdot d q ), \\
& II(A,B)=-\frac {1}{|d\rho|}\, Dd \rho \, (A,B)= -\frac {2}{\sqrt{1+4|q|^2}}\,\langle A_H, B_H \rangle\\
&\hskip.6truein=-\frac {2}{\sqrt{1+4|q|^2}}\,\left ( dt_\alpha\odot dt_\alpha +dx_\alpha\odot dx_\alpha+dy_\alpha\odot dy_\alpha+dz_\alpha\odot dz_\alpha \right) \, (A,B),
\end{aligned}\end{equation*}
where for a tangent vector $A$ we use $ A_H=A-dt(A)\dt - dx(A)\dx  -dy(A)\dy - dz(A)\dz$ for the orthogonal projection from $\mathbb{H}^{n+1}$ to
the horizontal space, which is given by $H= Ker\, {d\rho}\cap\,\{ \cap_{s=1}^3 Ker\,\hat\eta_s\}$.  From the above formulas we see that $\Theta\overset{def}{=}\iota^*\hat\eta$ is conformal to $\tilde\Theta$. Therefore, the qc-structure $\eta_s=\frac {\sqrt{1+4|q|^2}}{2}\hat\eta_s $, i.e., the standard qc-structure \eqref{e:stand contact form on H}, has  horizontal metric given by the restriction of the bilinear form $\Delta=const\,\Re(dq_{\alpha}\cdot d\bar q_{\alpha})\vert_{M}$, which is parallel along $M$. This is the symmetric form whose existence is claimed by Theorem \ref{t:surface1}, while the calibrating function is a certain multiple of $\sqrt{1+4|q|^2}$, cf. \eqref{def:f1}.

It is worth noting that the qc-Einstein structures in the qc-conformal class of the standard qc-structure were essentially classified in \cite[Theorem 1.1]{IMV1} where it shown that all qc-Einstein structures globally conformal to the standard qc-structure  are obtained from the standard with a qc-automorphism.

\subsection{Proof of Theorem~\ref{t:surface1}.} A key point of our analysis is a  volume-normalization condition, which is based on Lemma \ref{mu}.
To this effect we consider  a qc-conformal transformation $\eta_s=f\hat\eta_s$ where $f$ is a smooth function on
$M$. Let $\xi_s$, $\omega_s$, $\nabla$ and $\alpha_s$
be the Reeb vector fields, the fundamental 2-forms, the Biquard
connection and the $\spl$-connection 1-forms of the deformed
admissible set. The orthogonal complement
$V=\text{span}\{\xi_1,\xi_2,\xi_3\}$ of $H$ and the endomorphism
$I_1$, defined on the horizontal space $H$,  induce a
decomposition of the complexified tangent bundle of $M$ (we use
the same notation $TM$ for both the tangent bundle and its
complexification), $TM=V\oplus H^{1,0}_{I_1}\oplus H^{0,1}_{I_1},$
and consequently of the whole complexified tensor bundle
of $M$. We shall need the type decomposition of the one and two-forms on $M$,
\begin{align*}
T^*M\ &=\ H^*_{1,0}\oplus H^*_{0,1}\oplus L^*, \qquad L^*=\text{span}\{\eta_1,\eta_2,\eta_3 \},\\
\Lambda^2(T^*M)\ &=\
\Lambda^2(H^*_{1,0})\oplus\Lambda^2(H^*_{0,1})\oplus
(H^*_{1,0}\otimes H^*_{0,1}) \oplus \Lambda^2(L^*)\oplus
(L^*\otimes H^*).
\end{align*}
In particular, $H^{*}_{1,0}$ is the $2n$-dimensional space of all complex one-forms  which
 vanish on $\xi_1,\xi_2,\xi_3$ and are of
 type $(1,0)$ with respect to $I_1$ when restricted to $H$.
Similarly, using the endomorphism $I_2$ or
$I_3$ we obtain corresponding decompositions. We shall write explicitly the analysis with respect to $I_1$, but keep in mind that the
arguments remains true if we cyclicly permute the indices $1,2$ and $3$.

Consider the following complex 2-forms on $M$,
\begin{equation*}
\begin{aligned}
\hat\gamma_i=\hat\omega_j+\sqrt{-1}\ \hat\omega_k,\qquad
\gamma_i=\omega_j+\sqrt{-1}\ \omega_k,\qquad
{\Gamma}_i(\A,\B)\ = \ G(J_j\A,\B)+\sqrt{-1}\, G(J_k\A,\B).
\end{aligned}
\end{equation*}
We have $\gamma_s=f\hat\gamma_s$,
$\xi_t\lrcorner\gamma_s=0$ and $\gamma_1,
\hat\gamma_1|_H, \Gamma_1|_H\in\Lambda^2(H^*_{1,0})$ .
Moreover, since $N$ is a hyper-K\"ahler
manifold, the three 2-forms $\Gamma_s$ are closed, $d\Gamma_s=0$.
The volume normalization relies on the following algebraic
lemma.

\begin{lemma}\label{mu}
Let $\mathcal H^{4n}$ be a real vector space with
hyper-complex structure $(I_1,I_2,I_3)$, i.e.,
$I_1^2=I_2^2=I_3^2=-Id,\ I_1I_2=-I_2I_1=I_3$ and $\hat g$ and $g$ be two positive definite inner products on
$\mathcal H^{4n}$ satisfying $\hat g(I_sX,I_sY)=\hat g(X,Y)$, and
$g(I_sX,I_sY)=g(X,Y)$ for all  $X,Y\in \mathcal
H^{4n}$, $s=1,2,3$. If
\begin{equation*}
\hat\gamma_i(X,Y)= \hat g(I_jX,Y)+ \sqrt{-1}\, \hat g(I_kX,Y),\qquad
\gamma_i(X,Y)= g(I_jX,Y)+ \sqrt{-1}\, g(I_kX,Y),
\end{equation*}
then  there exists a positive real number $\mu$ such that
$\underset{n~\text{times}}{\underbrace{\hat\gamma_s\wedge\dots\wedge \hat\gamma_s}}=\mu\,\underset{n~\text{times}}{\underbrace{(\gamma_s\wedge\dots\wedge \gamma_s)}}$, $s=1,2,3.$
\end{lemma}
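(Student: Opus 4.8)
The statement is purely linear-algebraic: it compares two top exterior powers of the complex $(1,0)$-forms $\hat\gamma_s$ and $\gamma_s$ built from two hyper-Hermitian inner products on a fixed hyper-complex vector space $\mathcal H^{4n}$. The plan is to reduce to the case of a single complex structure, say $I_1$, and to exploit the fact that for a Hermitian metric the $n$-th wedge power of the associated $(1,0)$-valued form is a nonzero element of the one-dimensional top exterior power $\Lambda^{2n}(H^*_{1,0})$; the two such powers therefore differ by a scalar, and it remains only to see that the scalar is real and positive and independent of which $I_s$ we use. So I would proceed as follows.

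\emph{Step 1: the space $\Lambda^{2n}(H^*_{1,0})$ is one-dimensional.} Both $\hat\gamma_1$ and $\gamma_1$, restricted to $\mathcal H^{4n}$, lie in $\Lambda^2(H^*_{1,0})$, since $\hat g(I_jX,Y)+\sqrt{-1}\,\hat g(I_kX,Y)$ is of type $(2,0)$ with respect to $I_1$ (this is the standard identity $\omega_j+\sqrt{-1}\,\omega_k\in\Lambda^{2,0}_{I_1}$ for a hyper-Hermitian triple, and it is exactly the content already recorded in the text, $\gamma_1,\hat\gamma_1|_H\in\Lambda^2(H^*_{1,0})$). Since $H^*_{1,0}$ has complex dimension $2n$, the top power $\Lambda^{2n}(H^*_{1,0})$ is one-dimensional over $\mathbb C$. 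Hence there is a unique $\mu_1\in\mathbb C$ with $\hat\gamma_1^{\wedge n}=\mu_1\,\gamma_1^{\wedge n}$, provided $\gamma_1^{\wedge n}\neq 0$.

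\emph{Step 2: nonvanishing and computation of $\mu_1$ in a normal frame.} For a positive definite $I_1$-Hermitian metric $g$ choose a $g$-unitary $I_1$-complex basis; then $\gamma_1^{\wedge n}$ is, up to a nonzero constant (some $i^n$ and a combinatorial factor), the square of the canonical $(n,0)$-volume form of $g$, hence nonzero, and the same for $\hat g$. Taking the ratio in the $g$-unitary basis identifies $\mu_1$ with $(\det{}_{\mathbb C}A)^{2}$ up to a positive constant, where $A$ is the $I_1$-complex-linear part of the endomorphism $S$ defined by $\hat g=g(S\cdot,\cdot)$. In particular $\mu_1$ is a \emph{nonzero real number}: indeed $S$ is $g$-symmetric and $I_1$-commuting, so on each $I_1$-complex line $S$ acts by a positive scalar after complexification, giving $\det_{\mathbb C}A>0$, whence $\mu_1=\mu>0$. (Alternatively, and more invariantly: compare the real $2n$-forms obtained as real or imaginary parts; positivity of both Hermitian metrics forces the proportionality constant to be positive.)

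\emph{Step 3: independence of $s$.} It remains to check $\mu_1=\mu_2=\mu_3$. The cleanest route is to observe that the scalar equals, up to a universal positive constant, the ratio of the \emph{real} volume forms $\mathrm{vol}_{\hat g}/\mathrm{vol}_g$ on $\mathcal H^{4n}$: for each $s$, $\hat\gamma_s^{\wedge n}$ is (a fixed complex multiple of) the complex volume form of $\hat g$ with respect to $I_s$, and its modulus recovers $\mathrm{vol}_{\hat g}$. Since $\mathrm{vol}_{\hat g}$ and $\mathrm{vol}_g$ do not depend on the choice of $I_s$, neither does the quotient, so $\mu_s$ is the same positive number for $s=1,2,3$. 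This is the step I expect to require the most care, since one must track the precise normalizing constants relating $\gamma_s^{\wedge n}$ to $\mathrm{vol}_g$ and make sure the phase factors are independent of $s$; carrying it out in a common adapted basis — one that is simultaneously $g$-unitary for all three $I_s$ (i.e. a quaternionic-unitary $g$-basis) — makes the phases manifestly $s$-independent and the argument transparent. Setting $\mu:=\mu_1=\mu_2=\mu_3>0$ completes the proof.
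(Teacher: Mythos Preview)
Your outline is correct and the three steps are the right ones; Steps~1 and~3 match the paper essentially verbatim (the paper phrases Step~3 as $\gamma_s^{\,n}\wedge\overline{\gamma_s^{\,n}}=\mathrm{vol}(g)$, independent of $s$, whence $\mu^2$ and hence $\mu$ is independent of $s$, which is exactly your volume-ratio observation and dissolves the phase worry).  Step~2, however, is argued differently and contains a small miscomputation.  The paper gets reality of $\mu$ from the identity $I_2\gamma_1=\overline{\gamma_1}$ (and the same for $\hat\gamma_1$): applying $I_2$ to $\hat\gamma_1^{\,n}=\mu\,\gamma_1^{\,n}$ and comparing with complex conjugation forces $\mu=\bar\mu$; positivity then follows from connectedness of $GL(n,\mathbb H)$, which acts transitively on the compatible positive metrics, together with $\mu=1$ at $\hat g=g$.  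Your direct computation via the operator $S$ with $\hat g=g(S\cdot,\cdot)$ also works, but note that since both metrics are hyper-Hermitian, $S$ already commutes with \emph{all three} $I_s$ and is therefore $\mathbb H$-linear---there is no ``$I_1$-complex-linear part'' to extract.  Diagonalizing $S$ over $\mathbb H$ with positive eigenvalues $\lambda_1,\dots,\lambda_n$ gives $\hat\gamma_1=\sum_a\lambda_a\,\phi_a\wedge\psi_a$ in the induced frame, hence $\mu=\prod_a\lambda_a=\det_{\mathbb H}S=\sqrt{\det_{\mathbb C}S}$, not $(\det_{\mathbb C}S)^2$; the exponent is wrong but the positivity conclusion is of course unaffected.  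The trade-off: the paper's argument is shorter and coordinate-free, while yours yields the explicit value of $\mu$ and makes the $s$-independence immediate (the formula $\mu=\det_{\mathbb H}S$ visibly does not involve a choice of $I_s$).
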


\begin{proof}
A small calculation shows that both $\gamma_1$ and $\hat\gamma_1$
are of type $(2,0)$ with respect to $I_1$. The complex vector
space $\Lambda^{2n}(\mathcal H^*_{1,0})$ is one dimensional, and
$\gamma_1^{n}$ and $ \hat\gamma_1^{n}$ are non zero elements of
it, hence there exists a non zero complex number $\mu$ such that
$\gamma_1^{n}=\mu\, \hat\gamma_1^{n}.$ Note that
$I_2\gamma_1=\overline{\gamma_1}$ and the same holds true for
$\hat\gamma_1$. It follows that
\begin{equation*}
(I_2\gamma_1)^n= \overline {\gamma_1^n}\,   \ \text{ i.e., } \
\mu\hat\gamma_1^n = \bar \mu\,  \overline {\hat\gamma_1^n},
\end{equation*}
thus $\mu=\bar\mu\ne 0.$  The group $GL(n,\mathbb H)$ acts
transitively on the set of all positive definite inner products
$g$ of $\mathcal H$, compatible with the hyper-complex structure,
and hence also on the set of all corresponding 2-forms $\gamma_1$.
The group $GL(n,\mathbb H)$ is connected, therefore each orbit
is connected as well, which implies $\mu>0.$ It remains
to show that the constant $\mu$ in the equation
$\hat\gamma_s^{n}=\mu\,\gamma_s^{n}$ is independent of $s$. For
this we use that the $4n$-form $\gamma_1^{n}\wedge
\overline{\gamma_1^{n}}$ equals the volume form $vol(g)$ of the
metric $g$ and hence it is independent of $s$. This implies that
$\mu^2$ does not depend on $s$ and therefore the same is true for
$\mu$.
\end{proof}
From Lemma~\ref{mu} applied to the metrics $\hat g$ and
$G|_H$ on $H$ it follows that there exists a positive function
$\mu$ on $M$ such that $\Gamma_s^n|_H=\mu\hat\gamma^n_s|_H,$ $s=1,2,3$ i.e.,
\begin{equation}\label{e:def pf mu}
{\Gamma}_s^n \equiv \mu \hat\gamma_s^n \mod \{\eta_1,\eta_2,\eta_3\}.
 \end{equation}
At this point we define the "\emph{calibrated}" qc-structure  using the function $f$ defined by
\begin{equation}\label{def:f1}
f=\mu^{\frac{1}{n+2}}.
 \end{equation}
The reminder of this section is devoted to showing that with this choice of $f$ the qc-structure determined by $\eta_s$ satisfies all the requirements of the theorem.

We start by proving in Lemma~\ref{tech-lemma} a few important preliminary technical facts. Let us define the following three vector fields $r_s$,
\begin{equation}\label{e:def of the r's}
r_s=\xi_s-\frac{1}{f}\,J_sN,\ s=1,2,3.
\end{equation}
Since $\eta_t(r_s)=\delta_{ts}-\hat \eta_t(J_sN)=0$, $s,t=1,2,3,$
it follows that $r_s$ are  horizontal vector field, $r_s\in H$. We
will denote by $r_s$ also the corresponding 1-forms, defined by
$r_s(\A)=G(r_s,\A),\ \A\in TM$.
\begin{rmrk}
Note that in general expressions of the type
$\eta_1\wedge\eta_2\wedge\eta_3\wedge\delta$, with $\delta$ being
differential form on $M$, depend only on the restriction of
$\delta$ to $H$. This fact will be used repeatedly hereafter.
\end{rmrk}

\begin{lemma}\label{tech-lemma} We have
\begin{align}\label{basic eq}
\eta_2\wedge{\Gamma}_1^{n+1}\ &=\
(n+1)\,\eta_1\wedge\eta_2\wedge\eta_3\wedge\gamma_1^n\
,\\\label{basic eq2} \Gamma_1^{n+1}\ &=\
\sqrt{-1}(n+1)\eta_1\wedge(\eta_2+\sqrt{-1}\eta_3)\wedge\gamma_1^n\
\\\nonumber &\quad+\
n(n+1)f^{-2}\eta_1\wedge\eta_2\wedge\eta_3\wedge(-J_3r_3+
\sqrt{-1}J_2r_3+J_2r_2+\sqrt{-1}J_3r_3)\wedge\Gamma_1^{n-1}\ .
\end{align}
Furthermore, the above equations hold after any cyclic permutation of the indices $1,2$ and $3$.
\end{lemma}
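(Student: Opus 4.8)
The plan is to verify \eqref{basic eq} and \eqref{basic eq2} by purely pointwise linear-algebraic manipulations of the decomposition $TM=V\oplus H^{1,0}_{I_1}\oplus H^{0,1}_{I_1}$ described above, using the bundle identities already recorded: $\gamma_s=f\hat\gamma_s$, $\xi_t\lrcorner\gamma_s=0$, $\gamma_1,\hat\gamma_1|_H,\Gamma_1|_H\in\Lambda^2(H^*_{1,0})$, together with $\hat\xi_s=J_sN+\hat r_s$, the definition $\xi_s=\tfrac1f J_sN+r_s$ from \eqref{e:def of the r's}, and the defining formulas $\hat\eta_s(A)=G(J_sN,A)$, $\hat\omega_s(X,Y)=-II(I_sX,Y)$, $\hat g(X,Y)=\hat\omega_s(I_sX,Y)$. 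The essential tool is the Remark preceding the lemma: a term of the form $\eta_1\wedge\eta_2\wedge\eta_3\wedge\delta$ only feels $\delta|_H$, so on the right-hand sides I may freely replace $\gamma_s$, $\Gamma_s$, $J_sr_s$ by their horizontal parts, and on $H$ the equality $\Gamma_s|_H=\gamma_s+(\text{terms in }\eta)$ (which follows from $\hat g=G|_H$ on $H$ and $\gamma_s=f\hat\gamma_s$, so actually $\gamma_s|_H=f\,\Gamma_s|_H$ — I need to be careful: on $H$ one has $\hat\gamma_s=\Gamma_s|_H$, hence $\gamma_s|_H=f\Gamma_s|_H$, equivalently $\Gamma_s|_H=f^{-1}\gamma_s|_H$) turns $\gamma_1^n$ on $H$ into $f^{-n}\Gamma_1^n|_H$ when convenient.

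For \eqref{basic eq}: expand $\Gamma_1=\Gamma_1|_H+(\text{terms involving }\eta_2,\eta_3)$. Since $\Gamma_1$ is a genuine $2$-form on $TM$, write $\Gamma_1=\beta_1+\eta_2\wedge\lambda_2+\eta_3\wedge\lambda_3+c\,\eta_2\wedge\eta_3$ where $\beta_1=\Gamma_1|_{H\oplus\R\xi_1}$ modulo the $\eta_2,\eta_3$ directions; the key observations are that $\eta_2\wedge\Gamma_1^{n+1}$ kills every wedge factor containing $\eta_2$, and that $\Gamma_1$ involves $\xi_1$ only through a single $\eta_1$-factor (because $\Gamma_1|_H\in\Lambda^2(H^*_{1,0})$ and the mixed $V\otimes H$ part of $\Gamma_1$ can be computed from $\Gamma_1(\xi_s,X)=G(J_jJ_sN,X)+\sqrt{-1}G(J_kJ_sN,X)$ plus the $r_s$-corrections). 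Binomial expansion of $\Gamma_1^{n+1}$ then leaves, after wedging with $\eta_2$, exactly the term $(n+1)\,\eta_1\wedge\eta_2\wedge(\beta_1|_H)^n\wedge(\text{the }\eta_3\text{-part})$, and the factor $(\beta_1|_H)^n$ equals $\gamma_1^n$ on $H$ by the $f$-scaling identity combined with $\mu$-normalization — here is where bookkeeping of powers of $f$ matters, but it should collapse cleanly because $\eta_1\wedge\eta_2\wedge\eta_3$ is present and the horizontal $2n$-form space is $1$-dimensional. Equation \eqref{basic eq2} is the same expansion but retaining one more order: instead of collapsing all of $\Gamma_1^{n+1}$ to the top horizontal form times $\eta_1\wedge\eta_2\wedge\eta_3$, I keep the term with $\eta_1\wedge(\eta_2+\sqrt{-1}\eta_3)$ and only $n$ horizontal factors (giving the first line), plus the term with the full $\eta_1\wedge\eta_2\wedge\eta_3$ where one horizontal slot is occupied by the genuine $V\otimes H$-part of $\Gamma_1$ — and that $V\otimes H$-part is precisely computed from $r_2,r_3$ via $\xi_s=f^{-1}J_sN+r_s$, which is where the $n(n+1)f^{-2}$ coefficient and the combination $-J_3r_3+\sqrt{-1}J_2r_3+J_2r_2+\sqrt{-1}J_3r_3$ come from.

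The main obstacle is the correct identification of the $L^*\otimes H^*$ (i.e. mixed $V$–$H$) component of $\Gamma_1$ and its interaction with the binomial expansion: one must compute $\Gamma_1(\xi_s,X)$ for $s=2,3$ and $X\in H$ using $\xi_s=f^{-1}J_sN+r_s$ and $\Gamma_1(A,B)=G(J_2A,B)+\sqrt{-1}G(J_3A,B)$, then recognize the resulting $H^*$-valued $1$-forms as $f^{-1}$ times expressions in $J_2r_2,J_3r_3,\dots$ after using $J_2J_2N=-N$, $J_3J_2N=-J_1N$, etc. Tracking which of these survive after wedging against $\eta_1\wedge\eta_2\wedge\eta_3$ and with $\Gamma_1^{n-1}$ (which, modulo $\eta$'s, is $f^{-(n-1)}\gamma_1^{n-1}$ up to the $\mu$-constant, or may simply be left as $\Gamma_1^{n-1}$ since it is hit by the three $\eta$'s) is the delicate combinatorial step. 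Once \eqref{basic eq} and \eqref{basic eq2} are established for the index triple $(1,2,3)$, the final sentence follows by the remark already made in the text that the whole construction is invariant under cyclic permutation of $1,2,3$, since $f$ and $\mu$ were shown in Lemma~\ref{mu} to be independent of $s$.
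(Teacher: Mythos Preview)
Your overall strategy---decompose $\Gamma_1$ into its purely horizontal part plus $L^*\otimes H^*$ and $\Lambda^2(L^*)$ pieces, binomially expand $\Gamma_1^{n+1}$, and identify the surviving terms---is exactly the paper's approach; the paper simply packages it via two auxiliary forms $\Gamma_1'$ (with $\xi_s\lrcorner\Gamma_1'=0$) and $\Gamma_1''$ (with $(J_sN)\lrcorner\Gamma_1''=0$). However, there is a genuine error in your argument that would make the computation fail.

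You assert that $\hat g=G|_H$, hence $\hat\gamma_s=\Gamma_s|_H$ and therefore $\Gamma_s|_H=f^{-1}\gamma_s$. This is false: by definition $\hat g(X,Y)=-II(X,Y)$ is the (negative of the) second fundamental form restricted to $H$, \emph{not} the restriction of the ambient hyper-K\"ahler metric $G$. In general these two quadratic forms on $H$ are unrelated beyond both being $Sp(n)Sp(1)$-invariant; for instance in Example~\ref{e:Heisenberg embed} they differ by the non-constant factor $2(1+4|q|^2)^{-1/2}$. Consequently $\Gamma_1|_H$ and $\gamma_1$ are \emph{not} proportional as $2$-forms, and you cannot replace $(\Gamma_1|_H)^{n-1}$ by a multiple of $\gamma_1^{n-1}$. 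The only available relation is at the level of top horizontal forms: by the choice \eqref{def:f1} of $f$ one has $(\Gamma_1')^n=f^{2}\gamma_1^n$, where $\Gamma_1'$ is $\Gamma_1$ corrected so as to annihilate the $\xi_s$. This is precisely why equation \eqref{basic eq2} in the statement retains $\Gamma_1^{n-1}$ rather than $\gamma_1^{n-1}$ in its second line, and why the factor $f^{-2}$ appears there while \eqref{basic eq} has no residual power of $f$ (the $f^{-2}$ coming from the $\Lambda^2(L^*)$-part of $\Gamma_1$ cancels the $f^{2}$ from the normalization $(\Gamma_1')^n=f^{2}\gamma_1^n$). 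Once you replace your incorrect pointwise identification by this $n$-th-power relation, the rest of your outline---computing $\Gamma_1(\xi_s,X)$ via $\xi_s=f^{-1}J_sN+r_s$ to extract the $L^*\otimes H^*$ component, and tracking which terms survive wedging with $\eta_1\wedge\eta_2\wedge\eta_3$---goes through and matches the paper's argument.
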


\begin{proof}
Let us define ${\Gamma}^{'}_1$ and ${\Gamma}^{''}_1$ to be 2-forms
on $M$ which coincide with the 2-form $\Gamma_1$ when restricted
to the distribution $H$ and satisfy the additional  conditions
$\xi_s\lrcorner{\Gamma}^{'}_1=0$,
$(J_sN)\lrcorner{\Gamma}^{''}_1=0$. In order to
find the relation between $\Gamma_1$ and $\Gamma_1^{'}$, we
compute
\begin{multline*}
\Gamma'_1 ({\A},{\B})= \Gamma_1 ({\A} -\eta_s({\A})
\xi_s,{\B}-\eta_t({\B})\xi_t)=\Gamma_1({\A},{\B}) - \eta_s({\B})
\Gamma_1({\A},\xi_s)-\eta_s ({\A})\Gamma_1
(\xi_s,{\B})+
\Gamma_1(\xi_s,\xi_t)\eta_s({\A})\eta_t({\B})\\
= \Gamma_1({\A},{\B})-
\eta_t\wedge(\xi_t\lrcorner\Gamma_1)({\A},{\B})
+\frac 12 \Gamma_1(\xi_s,\xi_t)\eta_s\wedge\eta_t ({\A},{\B}).
\end{multline*}
A short calculation gives
\begin{multline*}
(\xi_t\lrcorner\Gamma_1)(\A)=G(J_2\xi_t,\A)+\sqrt{-1}G(J_3\xi_t,\A)=G(J_2(r_t+\frac{1}{f}J_tN),\A)+\sqrt{-1}+G(J_3(r_t+\frac{1}{f}J_tN),\A)
\\
=(J_2r_t+J_3r_t)(\A) \mod \{\eta_1,\eta_2,\eta_3\},
\end{multline*}
which shows that for  some functions $\Gamma^{s,t}_1$ on $M$ we have
\begin{equation}\label{gamma'}
{\Gamma^{'}}_1={\Gamma}_1-
\sum_{t=1}^3\eta_t\wedge(J_2r_t+\sqrt{-1}\,J_3r_t)+ \sum_{s,\,
t=1}^3{\Gamma}^{s,t}_1\eta_s\wedge\eta_t.
\end{equation}
Similarly to the derivation of \eqref{gamma'} we can find the relation between ${\Gamma}_1^{''}$ and ${\Gamma}_1$,
\begin{equation*}
{\Gamma}_1^{''}={\Gamma}_1 \ - \
f^{-2}(\eta_3\wedge\eta_1+\sqrt{-1}\,\eta_1\wedge\eta_2),
\end{equation*}
which gives
\begin{equation}\label{gamma_n+1}
\Gamma_1^{n+1}=
\sqrt{-1}\,{(n+1)}{f^{-2}}\,\eta_1\wedge(\eta_2+\sqrt{-1}\,\eta_3)
\wedge({\Gamma}_1^{''})^n =
\sqrt{-1}\,{(n+1)}{f^{-2}}\,\eta_1\wedge(\eta_2+\sqrt{-1}\,\eta_3)\wedge{\Gamma}_1^n.
\end{equation}
Clearly, $\Gamma_s^{'}\in \Lambda^2(H^*_{1,0})$ and
$(\Gamma_s^{'})^{n+1}=(\Gamma_s^{''})^{n+1}=0$. Noting that
\eqref{e:def pf mu} are equivalent to the equations
\begin{equation*}
(\Gamma_s^{'})^n=f^2\gamma_s^n
\mathfrak{}\end{equation*}
we obtain from \eqref{gamma'} the identity
\begin{align}\label{gamma'n}
{\Gamma}_1^n\ &=\ ({\Gamma}_1^{'})^n\ +\ n\sum_{s=1}^3\eta_s
\wedge(J_2r_s+\sqrt{-1}\,J_3r_s)\wedge({\Gamma}_1^{'})^{n-1}\qquad mod\ < \eta_s\wedge\eta_t>\\\nonumber &= \
f^2\gamma_1^n\ +\ n\sum_{s=1}^3\eta_s
\wedge(J_2r_s+\sqrt{-1}\,J_3r_s)\wedge({\Gamma}_1^{'})^{n-1}\qquad mod\ < \eta_s\wedge\eta_t>.
\end{align}
Finally, a substitution of \eqref{gamma'n} in
\eqref{gamma_n+1} gives
\begin{align*}
\Gamma_1^{n+1}\ &=\
\sqrt{-1}(n+1)\eta_1\wedge(\eta_2+\sqrt{-1}\eta_3)\wedge\gamma_1^n\
\\\nonumber &\quad+\
n(n+1)f^{-2}\eta_1\wedge\eta_2\wedge\eta_3\wedge(-J_3r_3+
\sqrt{-1}J_2r_3+J_2r_2+\sqrt{-1}J_3r_3)\wedge(\Gamma^{'}_1)^{n-1},
\end{align*}
which, in view of the relation
$\eta_1\wedge\eta_2\wedge\eta_3\wedge(\Gamma^{'}_1)^{n-1}=
\eta_1\wedge\eta_2\wedge\eta_3\wedge\Gamma_1^{n-1}$, yields
\eqref{basic eq2}. The equation \eqref{basic eq} follows now by
taking the wedge products of  both sides of \eqref{basic eq2} with
the 1-form $\eta_2$.

\end{proof}

Following is a technical lemma which will be used in the proof of Lemma \ref{l:step1} below.
\begin{lemma}\label{omega-gamma}
For any $\lambda\in H^*_{1,0}$ (considered with respect to $I_1$)
we have
$$\lambda\wedge\omega_1\wedge\gamma_1^{n-1}=\frac{\sqrt{-1}}{2n}(I_2\lambda)\wedge\gamma_1^n.$$
\end{lemma}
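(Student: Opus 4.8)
The plan is to reduce the identity to a statement of linear algebra on the horizontal space $H$, pointwise, since both sides are horizontal $(2n+1)$-forms (all interior products with $\xi_s$ annihilate $\omega_1$ and $\gamma_1$ by construction, so only the restriction to $H$ matters, as observed in the Remark following \eqref{e:def of the r's}). Thus I fix a point and work in $(\mathcal H^{4n}, I_1, I_2, I_3, g)$ with $\gamma_1 = \omega_2 + \sqrt{-1}\,\omega_3$, where $\omega_s = g(I_s\cdot,\cdot)$. Recall that $\gamma_1$ is of type $(2,0)$ with respect to $I_1$, while $\omega_1$ is the type-$(1,1)$ Kähler form of $I_1$; and $I_2$ sends $H^*_{1,0}$ to $H^*_{0,1}$ with $I_2\gamma_1 = \overline{\gamma_1}$.

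First I would choose a unitary coframe for $I_1$ adapted to the quaternionic structure: complex $(1,0)$-forms $\zeta^1,\dots,\zeta^{2n}$ paired so that, writing $\zeta^{2a-1},\zeta^{2a}$ for $a=1,\dots,n$, one has $\omega_1 = \frac{\sqrt{-1}}{2}\sum_{\mu}\zeta^\mu\wedge\bar\zeta^\mu$ and $\gamma_1 = \sum_{a=1}^n \zeta^{2a-1}\wedge\zeta^{2a}$ (this is the standard normal form coming from the fact that $\gamma_1$ is a nondegenerate holomorphic $2$-form compatible with the Hermitian metric), and $I_2\zeta^{2a-1} = \bar\zeta^{2a}$, $I_2\zeta^{2a} = -\bar\zeta^{2a-1}$. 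Then both sides of the claimed identity are computed by direct expansion: by $\mathbb{C}$-linearity in $\lambda$ it suffices to check $\lambda = \zeta^\mu$ for each basis form. The term $\gamma_1^{n-1}$ is a sum over choices of $n-1$ of the $n$ pairs $(2a-1,2a)$; wedging with $\omega_1$ and then with $\zeta^\mu$ forces the "missing" pair to be the one containing $\mu$, and one reads off the combinatorial constant $\frac{\sqrt{-1}}{2n}$, matching $(I_2\zeta^\mu)\wedge\gamma_1^n$ on the right after noting $I_2\zeta^{2a-1}=\bar\zeta^{2a}$ etc. closes up the missing pair into a full $\gamma_1$ together with its conjugate factor.

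The main obstacle is purely bookkeeping: getting the normalization of the adapted coframe exactly right so that the factors of $\sqrt{-1}$ and $2$ track correctly, and checking that the identity is genuinely basis-independent (i.e. that the specific adapted frame exists globally pointwise, which follows since $GL(n,\mathbb H)$ acts transitively on such structures, exactly as in the proof of Lemma \ref{mu}). An alternative, coordinate-free route that avoids choosing a frame is to use the Lefschetz-type decomposition on $(\mathcal H^{4n}, I_1)$: $\gamma_1^n$ spans $\Lambda^{2n}(\mathcal H^*_{1,0})$, wedging with $\omega_1$ is the Lefschetz operator $L$, and the identity amounts to computing $L$ on the primitive $(2n-1,0)$-component $\lambda\wedge\gamma_1^{n-1}$; I would expect this to reproduce the constant $\frac{\sqrt{-1}}{2n}$ via the standard $[\Lambda, L]$ commutation relations, but the frame computation is more transparent and self-contained, so that is the route I would actually write up.
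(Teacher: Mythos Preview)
Your proposal is correct and follows essentially the same route as the paper: the paper also fixes an adapted $(1,0)$-coframe $\phi_a,\psi_a$ (your $\zeta^{2a-1},\zeta^{2a}$) with $\omega_1=\tfrac{\sqrt{-1}}{2}\sum(\phi_a\wedge\bar\phi_a+\psi_a\wedge\bar\psi_a)$, $\gamma_1=\sum\phi_a\wedge\psi_a$, $I_2\phi_a=\bar\psi_a$, $I_2\psi_a=-\bar\phi_a$, and then verifies the identity by direct expansion on basis forms. The only difference is cosmetic notation; your alternative Lefschetz-operator remark is not pursued in the paper either.
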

\begin{proof}
We can take a basis of the cotangent space of $M$ in the form
$$\eta_1,\eta_2,\eta_3,\epsilon_1,...,\epsilon_n,I_1\epsilon_1,...,I_1\epsilon_n,
I_2\epsilon_1,...,I_2\epsilon_n,I_3\epsilon_1,\dots,I_3\epsilon_n,$$
where $\xi_s\lrcorner\epsilon_t=0,s=1,2,3$, $t=1,2,\dots,n,$ which is orthonormal in the sense  that the following equations hold
\[
\omega_1 = \sum_{s=1}^n(\epsilon_s\wedge
I_1\epsilon_s+I_2\epsilon_s\wedge
I_3\epsilon_s),\quad
\omega_2 = \sum_{s=1}^n(\epsilon_s\wedge
I_2\epsilon_s+I_3\epsilon_s\wedge
I_1\epsilon_s),\quad
\omega_3 = \sum_{s=1}^n(\epsilon_s\wedge
I_3\epsilon_s+I_1\epsilon_s\wedge I_2\epsilon_s).
\]
For $\phi_t=\epsilon_t+\sqrt{-1}I_1\epsilon_t$ and
$\psi_t=I_2\epsilon_t+\sqrt{-1}I_3\epsilon_t$ the forms
$\phi_1,\dots,\phi_n,\psi_1,...,\psi_n$ form a basis of $H^*_{1,0}.$
Moreover, we have
\begin{align*}
&I_2\phi_s=\bar\psi_s,\ \ I_2\psi_s=-\bar\phi_s,\  s=1,\dots,n, \quad\omega_1 = \frac{\sqrt{-1}}{2}\sum_{s=1}^n(\phi_s\wedge\bar\phi_s+\psi_s\wedge\bar\psi_s),\quad
 \gamma_1 =  \sum_{s=1}^n\phi_s\wedge\psi_s,\\
  &\gamma_1^n =n!\,\phi_1\wedge\psi_1\wedge\dots\wedge\phi_n\wedge\psi_n, \quad\gamma_1^{n-1} =
(n-1)!\sum_{s=1}^n\phi_1\wedge\psi_1\wedge\dots\widehat{\wedge\phi_s
\wedge\psi_s\wedge}\dots\wedge\phi_n\wedge\psi_n,\\
&\omega_1\wedge\gamma_1^{n-1} =
\frac{\sqrt{-1}(n-1)!}{2}\sum_{s=1}^n\phi_1\wedge\psi_1\wedge\dots\wedge(\phi_s\wedge\bar\phi_s+\psi_s\wedge\bar\psi_s)\wedge...
\wedge\phi_n\wedge\psi_n.
\end{align*}
Since $\lambda\in H^*_{1,0}$ there exist constants $a_s$, $b_s$, $s=1,\dots,n$
such that $\lambda=\sum_{s=1}^n(a_s\phi_s+b_s\psi_s)$. It follows that
$I_2\lambda=\sum_{s=1}^n(a_s\bar\psi_s-b_s\bar\phi_s).$ Finally we
compute (omitting the sum symbols)
\begin{gather*}
\lambda\wedge\omega_1\wedge\gamma_1^n
=\frac{\sqrt{-1}(n-1)!}{2}(a_t\phi_t+b_t\psi_t)
\wedge(\phi_1\wedge\psi_1\wedge\dots\wedge(\phi_s\wedge\bar\phi_s+\psi_s\wedge\bar\psi_s)\wedge\dots
\wedge\phi_n\wedge\psi_n) \\
=\frac{\sqrt{-1}(n-1)!}{2}(a_s\bar\psi_s-b_s\bar\phi_s)\wedge\phi_1\wedge\psi_1\wedge\dots
\wedge\phi_n\wedge\psi_n=\frac{\sqrt{-1}}{2n}(I_2\lambda)\wedge\gamma_1^n.
\end{gather*}
\end{proof}

\begin{lemma}\label{l:step1} The calibrated qc-structure
$\eta_s=f\hat\eta_s$, where $f$ is given by
\eqref{def:f1}, satisfies the structure equations
\eqref{str_eq_mod}. In particular, $(M,H,\eta_s)$ is a qc-Einstein structure. Furthermore, we have $$I_1r_1=I_2r_2=I_3r_3.$$
\end{lemma}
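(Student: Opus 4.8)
The plan is to exploit the closedness of the ambient Kähler forms $\Gamma_s$ together with the calibration identity \eqref{e:def pf mu} and the basic formulas of Lemma~\ref{tech-lemma}. Since $K$ is hyper-Kähler, $d\Gamma_s=0$, hence $d(\Gamma_1^{n+1})=0$; differentiating \eqref{basic eq} and \eqref{basic eq2}, and comparing the resulting identities with the structure equations \eqref{streq} for the deformed qc-structure $\eta_s=f\hat\eta_s$, one extracts relations between the connection $1$-forms $\alpha_s$, the scalar curvature $S$, and the horizontal $1$-forms $r_s$. The essential output should be that the choice $f=\mu^{1/(n+2)}$ is precisely what makes the $(0,2)$-part (with respect to $I_1$) of the would-be obstruction vanish, i.e.\ forces $\alpha_j\wedge\eta_k-\alpha_k\wedge\eta_j$ to drop out of $d\eta_i$ modulo the $\eta_j\wedge\eta_k$ term. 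Lemma~\ref{omega-gamma} is the tool that converts wedge products of $\lambda\in H^*_{1,0}$ with $\omega_1\wedge\gamma_1^{n-1}$ into multiples of $(I_2\lambda)\wedge\gamma_1^n$, which is how one isolates the coefficient of a single horizontal $1$-form inside a top-degree identity.

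Concretely, I would first compute $d\gamma_1$ from the structure equations: using $d\eta_i=2\omega_i-\eta_j\wedge\alpha_k+\eta_k\wedge\alpha_j-S\,\eta_j\wedge\eta_k$ one gets $d\omega_s$ in terms of the $\alpha$'s and lower-order data, hence $d\gamma_1=d\omega_2+\sqrt{-1}\,d\omega_3$ expressed through $\alpha_1\wedge\gamma_1$ plus terms in $\eta_s$. Then differentiate the right-hand side of \eqref{basic eq2}: the term $\sqrt{-1}(n+1)\eta_1\wedge(\eta_2+\sqrt{-1}\eta_3)\wedge\gamma_1^n$ produces, via $d\eta_s$ and $d(\gamma_1^n)=n\,d\gamma_1\wedge\gamma_1^{n-1}$, an expression whose $\eta_1\wedge\eta_2\wedge\eta_3\wedge(\text{$1$-form})\wedge\gamma_1^n$ part can be read off using the Remark that such products depend only on the restriction to $H$, together with Lemma~\ref{omega-gamma}. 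Setting this total derivative to zero and separating types with respect to $I_1$ should yield, after the substitution $f=\mu^{1/(n+2)}$ (which is where the precise exponent enters through $d(f^{-2})$ versus the $n$ coming from $\gamma_1^n$), the vanishing of the connection $1$-forms on $H$ in the normalization \eqref{e:vanishing alphas qc-Einstein}, equivalently \eqref{str_eq_mod}. By characterization (i)--(ii) of qc-Einstein structures recalled in the Preliminaries, this gives $(M,H,\eta_s)$ qc-Einstein.

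For the final assertion $I_1r_1=I_2r_2=I_3r_3$, I would compare the two formulas \eqref{basic eq} and \eqref{basic eq2} of Lemma~\ref{tech-lemma} after the structure equations have been forced into the qc-Einstein form. Once $\alpha_s=-S\eta_s$ on $H$, the $\eta_1\wedge\eta_2\wedge\eta_3\wedge(-J_3r_3+\sqrt{-1}J_2r_3+J_2r_2+\sqrt{-1}J_3r_3)\wedge\Gamma_1^{n-1}$ term in \eqref{basic eq2} must be consistent with the cyclically permuted versions; pairing the real and imaginary parts (and using $J_s=I_s$ on $H$ together with $J_sr_t$, $J_sN$ decompositions) should give linear relations among $I_1r_1,I_2r_2,I_3r_3$. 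Alternatively, and perhaps more cleanly, one can use $d\eta_i(\xi_j,\xi_k)$-type identities or the definition $r_s=\xi_s-\tfrac1f J_sN$ together with $d\hat\eta_s$ from \eqref{e:d hat eta}: the relations \eqref{bi1} for the Reeb fields, once the structure is qc-Einstein, pin down the $J_sN$-components and collapse the three vectors $I_sr_s$ to a common value.

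The main obstacle I anticipate is the bookkeeping of types and of the $\eta_s$-modulo terms: the identities of Lemma~\ref{tech-lemma} hold only modulo $\langle\eta_s\wedge\eta_t\rangle$ at intermediate stages, so one must be careful that after wedging with $\eta_1\wedge\eta_2\wedge\eta_3$ the ambiguous terms genuinely disappear, and that differentiating does not reintroduce them in an uncontrolled way. Getting the exponent $\tfrac{1}{n+2}$ to fall out correctly — reconciling the factor $n$ from $\gamma_1^n$, the factor $n+1$ from $\Gamma_1^{n+1}$, and the factor $-2$ from the conformal weight of $f^{-2}$ — is the delicate quantitative point, and it is exactly this matching that certifies \eqref{def:f1} as the right calibration.
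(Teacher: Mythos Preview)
Your approach is essentially the paper's: differentiate \eqref{basic eq}, use $d\Gamma_1=0$, substitute \eqref{basic eq2} for $\Gamma_1^{n+1}$, expand via the structure equations \eqref{streq}, split the resulting $(2n+4)$-form into $I_1$-type components, and apply Lemma~\ref{omega-gamma} to extract a linear system in $I_s\alpha_s|_H$ whose only solution is zero; the identity $I_1r_1=I_2r_2=I_3r_3$ then drops out of the remaining type component once $\alpha_s|_H=0$ (your first suggested route for the $r_s$). One correction: the exponent $\tfrac{1}{n+2}$ does \emph{not} enter through a $d(f^{-2})$ term during the differentiation---it is already built into \eqref{basic eq} and \eqref{basic eq2} via the normalization $(\Gamma_s')^n=f^2\gamma_s^n$ in the proof of Lemma~\ref{tech-lemma}, and no further use of the specific value of $f$ is made in the present argument.
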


\begin{proof}
Taking the exterior derivative of $\eqref{basic eq}$ and recalling that $\Gamma_1$ is a closed form, we obtain
\begin{multline}\label{d(besic eq)}
 =\ n(n+1)\eta_1\wedge\eta_2\wedge\eta_3\wedge
d\gamma_1\wedge\gamma_1^{n-1}+d\eta_2\wedge({\Gamma}_1^{n+1}+(n+1)\eta_1\wedge(\eta_3-\sqrt{-1}\eta_2)\wedge\gamma_1^n)\ \\
-\ (n+1)\left ( d\eta_1\wedge\eta_2\wedge(\eta_3-\sqrt{-1}\eta_2)\
+\
\eta_1\wedge\eta_2\wedge d(\eta_3-\sqrt{-1}\eta_2)\right )\wedge\gamma_1^n .
\end{multline}
The structure equations \eqref{streq} and the identities $\omega_2= \frac{1}{2}(\gamma_1+\bar\gamma_1)$, $
\omega_3=\frac{\sqrt{-1}}{2}(\bar\gamma_1-\gamma_1)$ and
$\omega_1\wedge\gamma_1^n=0$
imply
\begin{align*}
&d\eta_1\equiv 0 \mod \{\eta_2, \eta_3, H^{*}_{1,0}\},\quad d\eta_2\equiv \bar\gamma_1 \mod \{\eta_1,\eta_3, H^{*}_{1,0}\},\quad
d\eta_3\equiv \sqrt{-1}\bar\gamma_1 \mod \{\eta_1,\eta_2, H^{*}_{1,0}\},\\
&d(\eta_3-\sqrt{-1}\eta_2)\equiv-2\sqrt{-1}\gamma_1+\sqrt{-1}\eta_3\wedge\alpha_1
\mod \{\eta_1,\eta_2\},\\
&d\gamma_1\equiv-\sqrt{-1}\alpha_1\wedge\gamma_1+(-\alpha_3+\sqrt{-1}\alpha_2)\wedge\omega_1
\mod \{\eta_1,\eta_2,\eta_3\}.
\end{align*}
From \eqref{basic eq2} and the above identities applied to
\eqref{d(besic eq)} we find
\begin{multline*}
0\ =\
\sqrt{-1}n(n+1)d\eta_2\wedge\eta_1\wedge(\eta_2+\sqrt{-1}\eta_3)
\wedge f^{-2}\sum_{s=1}^3\eta_s\wedge(J_2r_s+\sqrt{-1}J_3r_s)\wedge\Gamma_1^{n-1}\ \\
-\
(n+1)\eta_1\wedge\eta_2\wedge\sqrt{-1}\eta_3\wedge\alpha_1\wedge\gamma_1^n\
-\
n(n+1)\eta_1\wedge\eta_2\wedge\eta_3\wedge\sqrt{-1}\alpha_1\gamma_1^n\
\\+\
n(n+1)\eta_1\wedge\eta_2\wedge\eta_3\wedge(\alpha_3+\sqrt{-1}\alpha_2)\wedge\omega_1
\wedge\gamma_1^{n-1}\
\end{multline*}
\begin{multline*}
=\ n(n+1)f^{-2}\eta_1\wedge\eta_2\wedge\eta_3\wedge
\bar\gamma_1\wedge{\Gamma}_1^{n-1}
\wedge(-J_3r_3+\sqrt{-1}J_2r_3+J_2r_2+\sqrt{-1}J_3r_2)\ \\
 +\
n(n+1)\eta_1\wedge\eta_2\wedge\eta_3\wedge(-\alpha_3+\sqrt{-1}\alpha_2)\wedge\omega_1\wedge\gamma_1^{n-1}\
\ -
(n+1)^2\eta_1\wedge\eta_2\wedge\eta_3\wedge\gamma_1^n\wedge\alpha_1.
\end{multline*}
The last expression is a $(2n+4)$-form which belongs to  the space (decomposition with
respect to $I_1$)
$$\Lambda^3(L^*)\otimes\Lambda^{2}(H^*_{0,1})\otimes\Lambda^{2n-1}(H^*_{1,0})\ \oplus\
\Lambda^3(L^*)\otimes\Lambda^{1}(H^*_{0,1})\otimes\Lambda^{2n}(H^*_{1,0}).$$
Hence, we obtain the next two identities
\begin{multline}\label{split_basic1}
(n+1)^2\eta_1\wedge\eta_2\wedge\eta_3\wedge\gamma_1^n\wedge\alpha_1\
\\
=\
n(n+1)\eta_1\wedge\eta_2\wedge\eta_3\wedge\frac{1}{2}(-\alpha_3-\sqrt{-1}I_1\alpha_3+\sqrt{-1}\alpha_2-I_1\alpha_2)
\wedge\omega_1\wedge\gamma_1^{n-1}.
\end{multline}
and also
\begin{multline}\label{split_basic2}
-\
n(n+1)f^{-2}\eta_1\wedge\eta_2\wedge\eta_3\wedge\bar\gamma_1\wedge{\Gamma}_1^{n-1}
\wedge(-J_3r_3+\sqrt{-1}J_2r_3+J_2r_2+\sqrt{-1}J_3r_2)\ \\
=\
n(n+1)\eta_1\wedge\eta_2\wedge\eta_3\wedge\frac{1}{2}(-\alpha_3+\sqrt{-1}I_1\alpha_3+\sqrt{-1}\alpha_2+I_1\alpha_2)
\wedge\omega_1\wedge\gamma_1^{n-1}.
\end{multline}
Equation \eqref{split_basic1} yields
\begin{equation}\label{split_basic1.1}
n(-\alpha_3-\sqrt{-1}I_1\alpha_3+\sqrt{-1}\alpha_2-I_1\alpha_2)\wedge\omega_1\wedge\gamma_1^{n-1}\equiv (n+1)\gamma_1^n\wedge(\alpha_1-\sqrt{-1}I_1\alpha_1)
\mod\{\eta_1,\eta_2,\eta_3\}.
\end{equation}

With the help of Lemma \ref{omega-gamma} we can write \eqref{split_basic1.1} in the form
\[
\frac{\sqrt{-1}}{2}I_2(-\alpha_3-\sqrt{-1}I_1\alpha_3
+\sqrt{-1}\alpha_2-I_1\alpha_2)
\equiv (n+1)(\alpha_1-\sqrt{-1}I_1\alpha_1)\quad
\mod\quad \{\eta_1,\eta_2,\eta_3\}.
\]
Taking the real part of the last identity we come to
$2(n+1)I_1\alpha_1+I_2\alpha_2+I_3\alpha_3\equiv 0\quad \mod\quad
\{\eta_1,\eta_2,\eta_3\}.$
A cyclic rotation of the indices $1,2,3$ in the above arguments gives the following system  $mod\
\{\eta_1,\eta_2,\eta_3\}$
\begin{equation*}
\begin{aligned}
&2(n+1)I_1\alpha_1+I_2\alpha_2+I_3\alpha_3\equiv0 \\
&I_1\alpha_1+2(n+1)I_2\alpha_2+I_3\alpha_3\equiv0   \\
&I_1\alpha_1+I_2\alpha_2+2(n+1)I_3\alpha_3\equiv0,
\end{aligned}
\end{equation*}
which has the unique solution $I_1\alpha_1\equiv I_2\alpha_2\equiv
I_3\alpha_3\equiv 0 \mod\ \{\eta_1,\eta_2,\eta_3\}$. Therefore, the calibrated qc-structure has vanishing $\spl$-connection 1-forms
\begin{equation}\label{e:calibrated vanish alpha}
(\alpha_1)|_H=(\alpha_2)|_H=(\alpha_3)|_H=0,
\end{equation}
hence
by \eqref{e:vanishing alphas qc-Einstein} it is a qc-Einstein structure. From  \eqref{split_basic2} (and a cyclic rotation of the indeces) we also conclude  that $I_1r_1=I_2r_2=I_3r_3$.
\end{proof}

We shall denote by $r$ the common vector defined above by  $I_sr_s$ in Lemma \ref{l:step1}, see also \eqref{e:def of the r's},
\begin{equation*}
r=-I_sr_s\in H, \qquad \text{hence}\ r_s=I_sr.
\end{equation*}
The calibrated qc-structure constructed in Lemma \ref{l:step1} enjoys further useful technical properties recorded below.

\begin{lemma}\label{l:qc-einstein embedded}
The second fundamental form $II$ of the qc-embedding $M\subset K$ and the calibrating function $f$ defined by \eqref{def:f1} satisfy the identities:
\par i) $II(X,Y)=-f^{-1}g(X,Y)$;
\par ii) $II(J_sN,J_sX)=-f^{-1}df(X)=g(r,X), \ X\in H$;
\par iii) $II(J_sN,J_tN)=-\delta_{st}f(S/2+g(r,r))$;
\par iv)  $df(J_sN)=df(\xi_s)=0$.
\end{lemma}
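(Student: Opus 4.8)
The plan is to obtain all four identities by combining the definitions \eqref{e:def of hat str}, \eqref{e:def of the r's}, the formula \eqref{e:d hat eta} for $d\hat\eta_s$, and the facts already established in Lemma~\ref{l:step1}: that $\eta_s=f\hat\eta_s$ is qc-Einstein (so $(\alpha_s)|_H=0$, cf.\ \eqref{e:calibrated vanish alpha}) with $r_s=I_sr$. Part (i) is immediate: under the qc-conformal scaling $\eta_s=f\hat\eta_s$ the horizontal metrics obey $g=f\hat g$ (the content of $\gamma_s=f\hat\gamma_s$), while $\hat g=-II$ on $H$ by construction, whence $II(X,Y)=-\hat g(X,Y)=-f^{-1}g(X,Y)$.

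For (ii) I would first record the elementary consequences of $\hat\eta_t(J_sN)=\delta_{ts}$ and \eqref{e:def of the r's}: $\hat\eta_s(\xi_s)=f^{-1}$, $[J_s\xi_s]_{TM}=I_sr_s=-r$, and $[J_k\xi_j]_{TM}=I_kr_j-f^{-1}J_iN$. Feeding these, together with $\hat\alpha_s(Y)=II(J_sN,Y)$ (computed in \S\ref{ss:qc-hypersurfaces}), into \eqref{e:d hat eta}, the Reeb equation $(\xi_s\lrcorner d\eta_s)|_H=0$ of the calibrated structure — which reads $(\xi_s\lrcorner d\hat\eta_s)|_H=f^{-2}df|_H$ — yields
\[
II(J_sN,J_sX)=\hat\alpha_s(I_sX)=f^{-1}df(X)+2f\hat g(X,r),\qquad X\in H.
\]
A second relation comes from the qc-Einstein property: computing $\alpha_i(X)=d\eta_k(\xi_j,X)$ in the same way gives $\alpha_i(X)=-2f\hat g(I_iX,r)-2f^{-1}df(I_iX)$, so $(\alpha_i)|_H=0$ forces $df(X)=-f^2\hat g(X,r)=-fg(X,r)$; substituting back into the displayed line gives $II(J_sN,J_sX)=f\hat g(X,r)=g(r,X)=-f^{-1}df(X)$, which is (ii). Two byproducts used below: $df|_H=-fg(r,\cdot)$ and, since $I_s$ is $g$-skew, $df(r_s)=-fg(r,I_sr)=0$, hence $df(\xi_s)=f^{-1}df(J_sN)$ by \eqref{e:def of the r's}.

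The crux is (iv), where the only yet-unused structural input, the closedness $d\Gamma_s=0$, must be invoked. As in the proof of Lemma~\ref{tech-lemma}, $\iota^*\Gamma_i=\Gamma_i''+f^{-2}(\eta_k\wedge\eta_i+\sqrt{-1}\,\eta_i\wedge\eta_j)$, where $\Gamma_i''$ coincides with $\Gamma_i$ on $H$ and has $(J_sN)\lrcorner\Gamma_i''=0$ for every $s$; there is no mixed horizontal–vertical term, as $\Gamma_i(X,J_tN)=0$ for $X\in H$. Evaluating the closed form $d(\iota^*\Gamma_i)$ on the vertical frame $(J_1N,J_2N,J_3N)$: the $d\Gamma_i''$-part drops out (every contraction of $\Gamma_i''$ with a $J_sN$ vanishes), and expanding the remaining two summands via the qc-Einstein structure equations \eqref{str_eq_mod} and the identity $\omega_s(J_aN,J_bN)=f^2g(I_sr_a,r_b)$ — which is $0$ whenever two of $s,a,b$ coincide, because $g(r,I_tr)=0$ — collapses everything to $-2f^{-1}\bigl(df(J_jN)+\sqrt{-1}\,df(J_kN)\bigr)=0$. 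Cycling $(i,j,k)$ gives $df(J_sN)=0$ for $s=1,2,3$, hence also $df(\xi_s)=0$. This step is the only real obstacle: (i)–(iii) are algebraic consequences of the calibrated qc-Einstein data, whereas (iv) genuinely uses the hyper-K\"ahler input $d\Gamma_s=0$, the key point being that $\iota^*\Gamma_s$ has no horizontal–vertical block, so that its top vertical exterior-derivative component isolates precisely the derivatives of $f$ along $J_1N,J_2N,J_3N$.

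Finally, (iii) is a short computation with \eqref{e:d hat eta} and \eqref{str_eq_mod}. Since $[J_i(J_iN)]_{TM}=0$ and $[J_i(J_jN)]_{TM}=J_kN$, formula \eqref{e:d hat eta} gives $d\hat\eta_i(J_iN,J_jN)=II(J_iN,J_kN)$; on the other hand $d\hat\eta_i=f^{-1}d\eta_i-f^{-2}df\wedge\eta_i$, and both $d\eta_i(J_iN,J_jN)$ and $(df\wedge\eta_i)(J_iN,J_jN)$ vanish by (iv) and $\omega_i(J_iN,J_jN)=0$; hence $II(J_iN,J_kN)=0$, the off-diagonal case. For the diagonal case the same device applied to $(J_jN,J_kN)$ gives $-\bigl(II(J_jN,J_jN)+II(J_kN,J_kN)\bigr)=d\hat\eta_i(J_jN,J_kN)=f^{-1}d\eta_i(J_jN,J_kN)=f\,\bigl(S+2g(r,r)\bigr)$ (now $\omega_i(J_jN,J_kN)=f^2g(r,r)$), and cycling the indices forces $II(J_1N,J_1N)=II(J_2N,J_2N)=II(J_3N,J_3N)=-f\bigl(S/2+g(r,r)\bigr)$, completing (iii).
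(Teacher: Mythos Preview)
Your argument is correct; the only substantive divergence from the paper is in part (iv). For (i)--(iii) you follow essentially the same route as the paper: (i) is definitional, (ii) comes from combining the Reeb condition $(\xi_s\lrcorner d\eta_s)|_H=0$ with $(\alpha_s)|_H=0$, and (iii) is read off from \eqref{e:d hat eta} and the qc-Einstein structure equations once (iv) is in hand.

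For (iv), however, you bring in the closedness $d\Gamma_s=0$ and evaluate on the vertical triple $(J_1N,J_2N,J_3N)$; the paper instead stays with the formula $d\eta_s(\A,\B)=f^{-1}df\wedge\eta_s(\A,\B)+fII(\A,[J_s\B]_{TM})-fII(\B,[J_s\A]_{TM})$ and the structure equations \eqref{str_eq_mod}. Feeding in the pairs $(J_iN,J_jN)$ and $(J_iN,J_kN)$, the paper gets $II(J_iN,J_jN)=-f^{-1}df(J_kN)$ from $d\eta_i(\xi_i,\xi_k)=0$, and $II(J_jN,J_iN)=f^{-1}df(J_kN)$ from the cyclically shifted identity; the \emph{symmetry} of $II$ then forces $df(J_kN)=0$ immediately. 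So your statement that (iv) ``genuinely uses the hyper-K\"ahler input $d\Gamma_s=0$'' overstates the matter: the paper shows that once the calibrated structure is qc-Einstein, (iv) follows from nothing more than the symmetry of the second fundamental form together with $DJ_s=0$ (already used to write $d\eta_s$ in terms of $II$). Your route via $d\Gamma_s=0$ is valid and self-contained, but the symmetry trick is shorter and avoids reintroducing $\Gamma_s$.
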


\begin{proof}
 {\it (i).} The identity $II(X,Y)=-f^{-1}g(X,Y)$
holds by the definition of~$g$, also recall \eqref{e:def of hat str}.

 {\it (ii).} Using the fact that the
complex structures $J_s$ are $D$-parallel, the relation
$\eta_s=fG(J_sN,.)$  and the formula $d\eta_s(\A,\B)=(D_{\A}\eta_s)(\B)-(D_{\B}\eta_s)(\A)$ we find
\begin{equation} \label{II-deta}
d\eta_s(\A,\B)=f^{-1}df\wedge\eta_s (\A,\B) + f II(\A, [J_s\B]_{TM})-f II(\B,[J_s\A]_{TM}).
\end{equation}
The above formula implies
\begin{equation}\label{II-deta-1}
d\eta_i(J_jN,J_kX)= -fII(J_jN,J_jX)-fII(J_kN,J_kX),\qquad d\eta_i(J_iN,X)= -df(X)+fII(J_iN,J_iX).
\end{equation}
On the other hand, since $\xi_s=\frac{1}{f}J_sN+J_sr$ and $\alpha_i{_\vert{_H}}=(\xi_j\lrcorner d\eta_k){_\vert{_H}}=0$,
we have
\begin{equation}\label{II-deta-2}
0=d\eta_i(\xi_j,J_kX)=f^{-1}d\eta_i(J_jN,J_kX)+2g(r,X),\qquad 0=d\eta_i(\xi_i,X)=f^{-1}d\eta_i(J_iN,X)-2g(r,X).
\end{equation}
The first of the above identities together with the first identity in
\eqref{II-deta-1} imply the equation $II(J_iN,J_iX)=g(r,X)$,
which together with the second identity in \eqref{II-deta-1} and
\eqref{II-deta-2} give the identities in (ii).

{\it (iii) and (iv).} From \eqref{II-deta}
we have
\begin{gather}\label{II-deta-3}
d\eta_i(J_iN,J_jN)=-df(J_jN)+fII(J_iN,J_kN), \quad
 d\eta_i(J_iN,J_kN)=-df(J_kN)-fII(J_iN,J_jN),\\
 d\eta_i(J_jN,J_kN)=-fII(J_jN,J_jN)-fII(J_kN,J_kN),\notag
\end{gather}
which give the wanted identities.
From \eqref{e:calibrated vanish alpha}, \eqref{streq}, \eqref{str_eq_mod} and \eqref{e:vanishing alphas qc-Einstein} we have $d\eta_s(\xi_j,\xi_k)=2\delta_{si}S$. Therefore, we obtain
\begin{equation}\label{II-deta-4}
\begin{aligned}
&
0=d\eta_i(\xi_i,\xi_j)=d\eta_i(f^{-1}J_iN+J_ir,f^{-1}J_jN+J_jr)=f^{-1}d\eta_i(J_iN,J_jN)\\
&
0=d\eta_i(\xi_i,\xi_k)=d\eta_i(f^{-1}J_iN+J_ir,f^{-1}J_kN+J_kr)=f^{-1}d\eta_i(J_iN,J_kN)\\
& S=d\eta_i(\xi_j,\xi_k)=f^{-2}d\eta_i(J_jN,J_kN)+2g(r,r).
\end{aligned}
\end{equation}
The first two identities of \eqref{II-deta-3} and the
first two equations in \eqref{II-deta-4} give
\begin{equation*}
II(J_iN,J_jN)=-df(J_kN),\qquad II(J_jN,J_iN)=df(J_kN),
\end{equation*}
hence $df(J_kN)=0$. 
Finally, recalling \eqref{e:def of the r's}, we compute
\[
df(\xi_s)\ =\ df(r_s+f^{-1}J_sN)=
df(I_sr)=\sum_{a=1}^{4n}df(I_se_a)g(r,e_a)
=-f^{-1}\sum_{a=1}^{4n}df(I_se_a)df(e_a)= 0.
\]
The third  identity of \eqref{II-deta-3} and the
third line of \eqref{II-deta-4} imply
$$II(J_iN,J_iN)=-f(S/2+g(r,r)),$$ which completes the proof of parts (iii) and (iv) of Lemma \ref{l:qc-einstein embedded}.
\end{proof}
The next lemma gives an explicit formula for the horizontal metric of the calibrated qc-Einstein structure.
\begin{lemma}\label{l:calibrated metric}
The horizontal metric $g$ of the calibrated by \eqref{def:f1} qc-structure is related to the second fundamental form of the qc-embedding by the formula
\begin{equation}\label{II-form}
g(\A_H,\B_H)=-fII(\A,\B)-\frac {S}{2}\sum_{s=1}^3\eta_s(\A)\eta_s(\B),\quad  \A,\B\in TM,
\end{equation}
where for $\A\in TM$ we let $\A_H=A-\sum_{s=1}^3\eta_s(\A)\xi_s$ be the horizontal part of $\A$.
\end{lemma}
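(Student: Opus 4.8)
The plan is to derive \eqref{II-form} directly from the identities already established in Lemma \ref{l:qc-einstein embedded}, using the decomposition of an arbitrary tangent vector $\A\in TM$ into horizontal and vertical parts together with the $D$-parallelism of the $J_s$. First I would fix $\A,\B\in TM$ and write $\A=\A_H+\sum_s\eta_s(\A)\xi_s$, $\B=\B_H+\sum_t\eta_t(\B)\xi_t$, where by definition $\eta_s(\A)=fG(J_sN,\A)$ and $\xi_s=f^{-1}J_sN+I_sr$ with $r\in H$; recall \eqref{e:def of the r's} and the notation $r_s=I_sr$ introduced just before Lemma \ref{l:qc-einstein embedded}. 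Since $II$ is a symmetric bilinear form on $TM$, I would expand $II(\A,\B)$ bilinearly over this decomposition, producing one purely horizontal term $II(\A_H,\B_H)$, mixed terms of the form $\eta_s(\B)\,II(\A_H,\xi_s)$ and $\eta_s(\A)\,II(\xi_s,\B_H)$, and purely vertical terms $\eta_s(\A)\eta_t(\B)\,II(\xi_s,\xi_t)$.

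The key step is then to evaluate each piece using parts (i)--(iv) of Lemma \ref{l:qc-einstein embedded}. The horizontal term is handled by (i): $II(\A_H,\B_H)=-f^{-1}g(\A_H,\B_H)$. For the mixed terms I would substitute $\xi_s=f^{-1}J_sN+I_sr$, so that $II(\A_H,\xi_s)=f^{-1}II(\A_H,J_sN)+II(\A_H,I_sr)$; the first summand vanishes because $II(J_sN,X)=II(J_sN,J_s(J_s^{-1}X))$ and part (ii) gives $II(J_sN,J_sX)=g(r,X)$, but more to the point one has $II(J_sN,X)=-II(J_sN,J_s(-J_sX))$... here I need to be a bit careful: actually the cleaner route is to note $II(J_sN, X) = II(J_sN, J_s(J_s^{-1}X))=g(r, J_s^{-1}X)=-g(r,J_sX)=g(I_sr,X)=g(r_s,X)$ for $X\in H$ by (ii), so $II(\A_H,\xi_s)=f^{-1}g(r_s,\A_H)+II(\A_H,r_s)$, and since $r_s\in H$, part (i) turns the second term into $-f^{-1}g(r_s,\A_H)$, giving $II(\A_H,\xi_s)=0$. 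Thus all mixed terms drop out. Finally the vertical terms: $II(\xi_s,\xi_t)=f^{-2}II(J_sN,J_tN)+f^{-1}II(J_sN,I_tr)+f^{-1}II(I_sr,J_tN)+II(I_sr,I_tr)$; applying (iii) to the first summand, the just-derived vanishing $II(J_sN,X)=g(r_s,X)$ (hence also $=II(I_sr,J_sN)$ type relations) and (i) to the last, after the cancellations I expect $II(\xi_s,\xi_t)=\delta_{st}\big(f^{-2}\cdot(-f)(S/2+g(r,r))+f^{-1}g(r,r)\big)=-\delta_{st}f^{-1}S/2$, so that $\sum_{s,t}\eta_s(\A)\eta_t(\B)II(\xi_s,\xi_t)=-f^{-1}(S/2)\sum_s\eta_s(\A)\eta_s(\B)$.

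Collecting the three contributions gives $II(\A,\B)=-f^{-1}g(\A_H,\B_H)-f^{-1}(S/2)\sum_s\eta_s(\A)\eta_s(\B)$, and multiplying by $-f$ and rearranging yields exactly \eqref{II-form}. The main obstacle, and the place I expect to spend the most care, is the bookkeeping in the vertical terms $II(\xi_s,\xi_t)$: one must correctly track the $f$-powers, verify that the off-diagonal ($s\neq t$) contributions indeed cancel (which uses that $df(J_kN)=0$ from (iii)--(iv), equivalently that $II(J_iN,J_jN)=0$ for $i\neq j$, a consequence visible in the proof of (iii)), and confirm the sign and coefficient on the diagonal; the mixed-term cancellation, while also requiring attention, is more mechanical once the identity $II(J_sN,X)=g(r_s,X)$ for $X\in H$ is in hand. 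No genuinely new computation beyond Lemma \ref{l:qc-einstein embedded} is needed, so the proof is essentially a careful reassembly.
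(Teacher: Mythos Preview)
Your proposal is correct and follows essentially the same route as the paper: both decompose $\A,\B$ into horizontal and vertical parts, then verify $II(\xi_s,X)=0$, $II(\xi_s,\xi_s)=-f^{-1}S/2$, and $II(\xi_i,\xi_j)=0$ using Lemma~\ref{l:qc-einstein embedded} together with the substitution $\xi_s=f^{-1}J_sN+I_sr$. The only cosmetic difference is that the paper records these three identities first and then assembles \eqref{II-form}, whereas you organize the computation around the bilinear expansion from the outset.
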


\begin{proof}
A few calculations give the next three identities
\begin{multline*}
II(\xi_s,X)= II(I_sr+f^{-1}J_sN,X) = II(I_sr,X) - f^{-1}II(J_sN,J_s(J_sX)) =-f^{-1}g(I_sr,X)-f^{-1}g(r,I_sX)=0,
\end{multline*}
\begin{multline*}
II(\xi_s,\xi_s)= II(I_sr+f^{-1}J_sN,I_sr + f^{-1}J_sN)= II(I_sr,I_sr)+ 2f^{-1}II(J_sN,J_sr) \\
+ f^{-2}II(J_sN,J_sN)=-f^{-1}g(r,r)+ 2f^{-1}g(r,r)-
f^{-1}(S/2+g(r,r))=-f^{-1}S/2,
\end{multline*}
\begin{multline*}
II(\xi_i,\xi_j)=II(I_ir+f^{-1}J_iN,I_jr+ f^{-1}J_jN)=II(I_ir,I_jr)+ f^{-1}II(J_iN,J_jr)\ \\
+ f^{-1}II(J_ir,J_jN)+f^{-2}II(J_iN,J_jN) =0.
\end{multline*}
The above identities together with  $II(X,Y)=-f^{-1}g(X,Y)$ yield
\eqref{II-form}, which completes the proof.
\end{proof}

At this point we are ready to complete the proof of Theorem \ref{t:surface1}. We proceed by showing that there exists a unique section  $\Delta$  of the pullback
bundle $(T^*K\otimes T^*K)|_M\rightarrow M,$ which is
$J_s$-invariant, and whose restriction to $TM$ coincides with the
tensor $-fII$.  It will be convenient to consider the \emph{calibrated transversal} to $M$ vector field
$\xi(p)=f^{-1}(p)N(p)+r(p)$, $p\in M$, which is a section of the vector bundle $TK|_M\rightarrow M$. Clearly,  $J_s\xi =\xi_s$ by \eqref{e:def of the r's}, which together with the  $J_s$ invariance of $II$ on the horizontal space $H$ gives the existence of $J_s$-invariant bilinear form on $TK|_M\rightarrow M$ by adding a bilinear form on the complement $V\oplus R\otimes \xi$. In fact, with the obvious identifications, since the fiber of $TK|_M$ over any $p\in M\subset K$ decomposes as a direct sum of subspaces as $H_p\oplus V_p\oplus R\otimes \xi(p)$, for a $v\in T_pK$ we define
$$ v'=v-\lambda(v)\xi(p)\in T_p M=H_p\oplus V_p,\qquad v''=v{'}-\sum_{s=1}^3\eta_s(v')\xi_s\in H_p,$$ where  $\lambda$ is a 1-form,  $\lambda=fG(N,.)$, so that $v'$ is the projection of $v$ on $T_p M=H_p\oplus V_p$ parallel to the calibrated transversal field $\xi$. We can rewrite formula~\eqref{II-form} in terms of the introduced decomposition as follows
\begin{equation*}
-fII(\A,\B)=g(\A'',\B'')+\frac {S}{2}\sum_{s=1}^3\eta_s(\A)\eta_s(\B),\qquad
\A,\B\in T_pM,
\end{equation*}
which leads to the following definition of the symmetric bilinear form $\Delta$,
\begin{multline}\label{delta-def}
\Delta(v,w)\overset{def}{=} -fII(v',w')+ \frac {S}{2}\lambda(v)\lambda(w)=
g(v'',w'')+ \frac {S}{2}\sum_{s=1}^3\eta_s(v')\eta_s(w') +
\frac {S}{2}\lambda(v)\lambda(w),\ v,w\in T_pK.
\end{multline}

We shall prove that this symmetric form is parallel as required, i.e, for any $\A\in TM$ and $v,w\in TK$ we have $(D_{\A}\Delta)(v,w)=0.$ From the symmetry and $Sp(1)$ invariance of $\Delta$ we have trivially for $v,w\in TK$ the identities
\begin{equation}\label{D-delta-trivial}
(D_\A\Delta)(v,w)=(D_\A\Delta)(w,v),\qquad (D_\A\Delta)(J_sv,J_sw)=(D_\A\Delta)(v,w).
\end{equation}
Furthermore, the restrictions of $\Delta(J_s ., .)$ to $TM$ are closed 2-forms on $M$. Indeed, let $\Delta_s$ be the 2-form on $M$
defined by
\begin{equation*}
\Delta_s(\A,\B)=\Delta(J_s\A,\B).
\end{equation*}
Using the identity
$(J_i\A)'=(J_i\A)''+\eta_j(\A)\xi_k-\eta_k(\A)\xi_j$ in \eqref{delta-def} we see that
\begin{equation*}
\Delta_i(\A,\B)=\omega_i(\A,\B)+\frac {S}{2}\sum_{s=1}^3\eta_s((J_i\A)')\eta_s(\B)=
(\omega_i+\frac {S}{2}\eta_j\wedge\eta_k)(\A,\B)=\frac{1}{2}d\eta_i(\A,\B),
\end{equation*}
which implies  $d\Delta_i(\A,\B,\C)=0.$ On the other hand, the
exterior derivative $d\Delta_i$ can be expressed in terms of the
covariant derivative $D\Delta_i$ through the well know formula
\begin{equation}\label{e:Delta closed cor}
d\Delta_i(\A,\B,\C)=(D_{\A}\Delta_i)(\B,\C)+(D_{\B}\Delta_i)(\C,\A)
+(D_{\C}\Delta_i)(\A,\B).
\end{equation}
Since by assumption $DJ_s=0$ we have
$(D_{\A}\Delta_s)(\B,\C)=(D_{\A}\Delta)(J_s\B,\C),$ equation \eqref{e:Delta closed cor} gives
\begin{equation}\label{D-delta}
(D_\A\Delta)(J_s\B,\C)+(D_\B\Delta)(J_s\C,\A)+(D_\C\Delta)(J_s\A,\B)=0,
\qquad \A,\B,\C\in TM.
\end{equation}
We will show that the identities \eqref{D-delta-trivial} and \eqref{D-delta} yield
$(D_{\A}\Delta)(v,w)=0$. An application of  \eqref{D-delta} gives
\begin{equation*}\begin{aligned}
& -(D_X\Delta)(Y,Z)+(D_{J_iY}\Delta)(J_iZ,X)+(D_Z\Delta)(X,Y)=0, \\
& -(D_{J_kX}\Delta)(J_kY,Z)+(D_{J_iY}\Delta)(J_iZ,X)+(D_Z\Delta)(X,Y)=0.
\end{aligned}
\end{equation*}
Therefore,
$(D_{J_sX}\Delta)(J_sY,Z)=(D_X\Delta)(Y,Z)=(D_X\Delta)(J_sY,J_sZ)$,
 which by  \eqref{D-delta-trivial},
implies $(D_{J_sX}\Delta)(Y,J_sZ)=(D_X\Delta)(Y,Z).$ It follows
$$(D_{J_sX}\Delta)(Y,Z)=-(D_X\Delta)(Y,J_sZ)=(D_X\Delta)(J_sY,Z)
=-(D_{J_sX}\Delta)(Y,Z),$$
thus
$(D_X\Delta)(Y,Z)=0.$

Another use of  \eqref{D-delta} gives
\begin{equation}\label{D-Delta-1}
(D_{\xi_i}\Delta)(J_iY,Z)+(D_Y\Delta)(Z,\xi)-(D_Z\Delta)(\tilde
N,Y)=0,
\end{equation}
which implies
\begin{equation*}
\begin{aligned}
&(D_{\xi_1}\Delta)(J_1Y,Z)=(D_{\xi_2}\Delta)(J_2Y,Z)=(D_{\xi_3}\Delta)(J_3Y,Z),
\\
&(D_{\xi_1}\Delta)(Y,J_1Z)=(D_{\xi_2}\Delta)(Y,J_2Z)=(D_{\xi_3}\Delta)(Y,J_3Z).
\end{aligned}
\end{equation*}
Therefore, we have
\begin{gather*}
(D_{\xi_i}\Delta)(Y,Z)=(D_{\xi_i}\Delta)(J_iY,J_iZ)=(D_{\xi_j}\Delta)(J_jY,J_iZ)
=
(D_{\xi_j}\Delta)(J_jY,J_jJ_kZ)=(D_{\xi_i}\Delta)(J_jY,J_iJ_kZ)\\\nonumber
=-(D_{\xi_i}\Delta)(J_jY,J_jZ)=-(D_{\xi_i}\Delta)(Y,Z),
\end{gather*}
thus
\begin{equation}\label{D-Delta-xiYZ}
(D_{\xi_s}\Delta)(Y,Z)=0.
\end{equation}
Now, a substitution  in \eqref{D-Delta-1} gives
\begin{equation}\label{D-Delta-2}
(D_Y\Delta)(Z,\xi)=(D_Z\Delta)(Y,\xi).
\end{equation}
Invoking again  \eqref{D-delta} we find
\begin{equation*}
(D_{\xi_j}\Delta)(J_iY,Z)+(D_{Y}\Delta)(J_kZ,\xi)-(D_Z\Delta)(\tilde
N,J_kY)=0,
\end{equation*}
which together with \eqref{D-Delta-xiYZ} and \eqref{D-Delta-2} give
$(D_{J_sX}\Delta)(Y,\xi)=(D_X\Delta)(J_sY,\xi).$
In addition, it also follows \[(D_{J_kX}\Delta)(Y,\xi)=(D_X\Delta)(J_iJ_jY,\tilde
N)=(D_{J_jJ_iX}\Delta)(Y,\xi)=-(D_{J_kX}\Delta)(Y,\xi),\]
thus
$(D_X\Delta)(Y,\xi)=0$ as well.

Next, we apply  \eqref{D-delta} as follows
\begin{equation}\label{D-Delta-6}
\begin{aligned}
&-(D_{\xi_j}\Delta)(\xi_j,Z)-(D_{\xi_k}\Delta)(\xi_k,Z)+(D_Z\Delta)(\tilde
N,\tilde
N)=0,\\
&-(D_{\xi_i}\Delta)(\xi_i,Z)-(D_{\xi_j}\Delta)(\xi_j,Z)-(D_{J_jZ})\Delta(\tilde
N,\xi_j)=0.
\end{aligned}
\end{equation}
Since, $(D_{J_jZ}\Delta)(\xi,\xi_j)=0,$ the second
equation in \eqref{D-Delta-6} implies $(D_{\xi_s}\Delta)(\xi_s,X)=0,$
together with the first equation in \eqref{D-Delta-6}  give
$(D_{\xi_s}\Delta)(\xi,X)=(D_{X}\Delta)(\xi,\xi)=0.$

Finally,
from  \eqref{D-delta} we have
$(D_{\xi_i}\Delta)(\xi,\xi)+(D_{\xi_j}\Delta)(J_k\xi,\tilde
N)-(D_{\xi_k}\Delta)(\xi,J_j\xi)=0$,
which implies
$(D_{\xi_s}\Delta)(\xi,\xi)=0.$
This completes the proof of Theorem~\ref{t:surface1}.

We record an important relation between the calibrating function and the parallel bilinear form,
\begin{equation}\label{df}
\Delta(N,\A)=-fII(N',\A)=f^2II(r,\A)=-fg(r,\A'')=df(\A'')=df(\A),
\end{equation}
which follows from Lemma~\ref{l:qc-einstein embedded} and the definition of $\Delta$, \eqref{delta-def}.

As an  application of  Theorem~\ref{t:surface1} we have the following result.
\begin{thrm}\label{surface2}
Let  $(K,G)$ be a hyper-K\"ahler manifold with Riemannian
curvature tensor $\hat R$. If $M$ is a qc-hypersurface of $K$ with
normal vector field $N$ then  we have that $\hat R_{vw}N=0$ for
all $p\in M$ and $v,w\in T_pK$. In particular,
the Riemannian curvature tensor $\hat R$ is degenerate at each
point $p$ of the hypersurface $M$.
\end{thrm}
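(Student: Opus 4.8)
The plan is to exploit the existence of the parallel, $J_s$-invariant bilinear form $\Delta$ on $TK|_M$ from Theorem~\ref{t:surface1}, together with the relation \eqref{df}, which identifies $\Delta(N,\cdot)$ with $df$ along $M$. Since $\Delta$ is parallel with respect to the pull-back of the Levi-Civita connection $D$ along $M$, for any $\A,\B\in TM$ and any sections $v,w$ of $TK|_M$ we have $(D_\A D_\B - D_{D_\A \B})\Delta = 0$; equivalently, commuting two such covariant derivatives kills $\Delta$. The idea is to feed the curvature identity $\hat R_{\A,\B} = D_\A D_\B - D_\B D_\A - D_{[\A,\B]}$ into the second covariant derivative of $\Delta$ and conclude that $\hat R_{\A,\B}$ acts trivially on $\Delta$ in an appropriate sense, then use the nondegeneracy of $\Delta$ (it restricts to a definite form, being proportional to $-f\,II$ on $H$ and, by Lemma~\ref{l:calibrated metric}, definite on all of $TM$, and by construction positive on the $\xi$-direction) to extract information about $\hat R_{\A,\B}N$.

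Concretely, first I would differentiate the relation \eqref{df}, $\Delta(N,\A)=df(\A)$ for $\A\in TM$, one more time along $M$. Using $D_\B N = -\,(\text{shape operator})(\B)$ and the Weingarten formula, together with $D\Delta = 0$ along $M$, the second derivative of the left side produces $\Delta(\hat R_{\A,\B}N, \cdot)$-type terms plus terms involving the second fundamental form $II$ and its covariant derivative. The second derivative of the right side is the Hessian of $f$, which is symmetric in $\A,\B$. Antisymmetrizing in $\A$ and $\B$ therefore isolates a curvature term: schematically, $\Delta(\hat R_{\A,\B}N, v) = (\text{terms built from } II, \nabla II, \text{and the Biquard-connection data of the calibrated qc-Einstein structure})$. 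The second fundamental form of the calibrated structure is completely pinned down by Lemma~\ref{l:qc-einstein embedded} (it equals $-f^{-1}g$ on $H$, $g(r,\cdot)$ on the mixed directions, and $-f(S/2+g(r,r))$ on the $J_sN$–$J_sN$ entries), and the covariant derivatives of these quantities are controlled by the qc-Einstein structure equations \eqref{str_eq_mod}, \eqref{e:vanishing alphas qc-Einstein}. The hope — and I expect this is where the real work lies — is that all these right-hand-side terms cancel, giving $\Delta(\hat R_{\A,\B}N, v) = 0$ for all $v\in TK|_M$, whence $\hat R_{\A,\B}N = 0$ by nondegeneracy of $\Delta$. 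One then has to upgrade from $\A,\B\in TM$ to arbitrary $v,w\in T_pK$; this is immediate from the symmetries of the Riemann tensor of a hyper-K\"ahler (hence Ricci-flat K\"ahler) metric, since $\hat R_{vw}N$ with one slot normal can be re-expressed, via the curvature symmetries and $J_s$-invariance, in terms of $\hat R$ evaluated on tangential vectors — alternatively, since $\hat R_{vw}$ is skew and $\langle \hat R_{vw}N, N\rangle = 0$ automatically, it suffices to know $\langle \hat R_{vw}N, X\rangle$ and $\langle \hat R_{vw}N, \xi\rangle$, which by the pair symmetry $\hat R(v,w,N,\cdot) = \hat R(N,\cdot,v,w)$ reduces to the tangential case already handled.

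An alternative, possibly cleaner route avoids the Hessian computation: since $\Delta$ is $D$-parallel along $M$, its ``extension ambiguity'' is rigid, and one can instead argue that the restriction of the ambient curvature operator $\hat R_{\A,\B}$, $\A,\B\in TM$, must preserve the decomposition $TK|_M = H\oplus V \oplus \R\xi$ compatibly with $\Delta$; combined with the fact that on a hyper-K\"ahler manifold $\hat R_{\A,\B}$ commutes with all $J_s$ (because $DJ_s=0$ forces $[\hat R_{\A,\B},J_s]=0$), one gets strong constraints. The key observation to push through is that $\Delta$ being parallel means $\hat R_{\A,\B}$ is a $\Delta$-skew endomorphism of $TK|_M$ for all $\A,\B\in TM$; but the contracted second Bianchi / Codazzi-type identity relating $\hat R_{\A,\B}N$ to $D_\A II(\B,\cdot) - D_\B II(\A,\cdot)$, together with the explicit form of $II$ from Lemma~\ref{l:qc-einstein embedded} and the vanishing of the connection $1$-forms $\alpha_s$ on $H$ from \eqref{e:calibrated vanish alpha}, forces $D_\A II(\B,\cdot) - D_\B II(\A,\cdot)$ to vanish, hence $\hat R_{\A,\B}N=0$. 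The main obstacle, in either approach, is the bookkeeping: one must carefully compute the covariant derivative along $M$ of the second fundamental form using that $f$ is the calibrating function (so $df(\xi_s) = 0$ by Lemma~\ref{l:qc-einstein embedded}(iv), and $df(\A) = \Delta(N,\A)$ by \eqref{df}) and the qc-Einstein structure equations, and verify the cancellation. Once the curvature term is isolated and shown to vanish on tangential arguments, the final statement that $\hat R$ is degenerate at each point of $M$ is immediate, since $N$ is a nonzero vector in the kernel of $v\mapsto \hat R_{\,\cdot\,,\,\cdot\,}v$ viewed appropriately, i.e., $\hat R_{vw}N = 0$ for all $v,w$.
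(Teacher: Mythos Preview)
Your alternative route via the Codazzi equation is exactly what the paper does, but you are proposing to verify $(D_\A II)(\B,\C)=(D_\B II)(\A,\C)$ the hard way, by differentiating the case-by-case formulas of Lemma~\ref{l:qc-einstein embedded}. The paper's key simplification is to go in the opposite direction: extend $II$ trivially to $TK|_M$ by $II(N,\cdot)=0$, and then write it \emph{globally} in terms of the parallel form $\Delta$ as
\[
II(v,w)=-\tfrac{1}{f}\,\Delta\big(v-G(v,N)N,\ w-G(w,N)N\big).
\]
Since $D\Delta=0$ along $M$, differentiating this along $\A\in TM$ only produces derivatives of $f$ and of $N$; using $D_\A N$ together with \eqref{df} in the form $\Delta(N,\cdot)=df$ one finds
\[
(D_\A II)(\B,\C)=\tfrac{1}{f^2}\big(df(\A)\Delta(\B,\C)+df(\B)\Delta(\C,\A)+df(\C)\Delta(\A,\B)\big),
\]
which is manifestly symmetric in $\A,\B$. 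The Codazzi identity then gives $G(\hat R_{\A\B}N,\C)=0$ for all $\A,\B,\C\in TM$, and the extension to arbitrary $v,w\in T_pK$ follows from $\hat R_{J_sv,J_sw}=\hat R_{v,w}$ exactly as you suggest. So your plan works, but the ``bookkeeping'' you anticipate disappears once you use $\Delta$ rather than the componentwise description of $II$.

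One genuine error in your first route: you claim $\Delta$ is nondegenerate on $TK|_M$. This is false in general---formula \eqref{delta-def} shows that when $S=0$ the form $\Delta$ has signature $(4n,0)$ with a $4$-dimensional kernel (this is precisely the Heisenberg case in the proof of Theorem~\ref{t:embedded in Hn}). So the step ``$\Delta(\hat R_{\A\B}N,v)=0$ for all $v$ implies $\hat R_{\A\B}N=0$'' would fail. The Codazzi argument avoids this entirely, since it lands directly on $G(\hat R_{\A\B}N,\C)=0$ with the nondegenerate metric $G$.
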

\begin{proof}
Let $M$ be a qc-hypersurface of the hyper-K\"ahler manifold
$(K,G,J_1,J_2,J_3)$. Let $f$ and $\eta_s$ be the calibrating function and calibrated qc-structure determined in Theorem~\ref{t:surface1}, see also \eqref{def:f1}. Let us extend the second fundamental form $II$ of the embedding to
a section of the bundle $TK|_M\otimes TK|_M\rightarrow M$ by
setting $II(N,\A)=II(N,N)=II(\A,N)=0,\ \A\in TM\subset TK.$  For any $v,w\in TK$ we have
\begin{gather*}
II(v,w)=-\frac{1}{f}\Delta\left(v-G(v,N)N,w-G(w,N)\right)\\\nonumber
=-\frac{1}{f}\left\{\Delta(v,w)-G(v,N)\Delta(N,w)-G(w,N)\Delta(N,v)+G(v,N)G(w,N)\Delta(N,N)\right\}.
\end{gather*}
Using the Levi-Civita connection $D$ of the hyper-K\"ahler manifold $K$ we differentiate the above equation to
obtain
\begin{gather*}
(D_\A II)(\B,\C)=\frac{df(\A)}{f^2}\Delta(\B,\C)+\frac{1}{f}\left\{G(\B,D_\A N)df(\C)+G(\C,D_\A N)df(B)
\right\}\\
=\frac{1}{f^2}\left\{df(\A)\Delta(\B,\C)+df(\B)\Delta(\C,\A)+df(\C)\Delta(\B,\A)\right\},
\end{gather*}
which, in particular, implies
$(D_\A II)(\B,\C)-(D_\B II)(\A,\C)=0$.
On the other hand we compute
\begin{gather*}
(D_\A II)(\B,\C)=\A(II(\B,\C))-II(D_\A \B,\C)-II(\B,D_\A\C)\\
=-\A G(D_\B N,\C)+G(D_{D_\A\B}N,\C)+G(D_\B N,D_\A\C)=-G(D_\A D_\B
N,\C)+G(D_{D_\A\B}N,\C).
\end{gather*}
For the curvature tensor $\hat R$ of $D$ we obtain
\begin{gather*}
0=(D_\A II)(\B,\C)-(D_\B II)(\A,\C)=-G(D_\A D_\B N,\C)+G(D_{D_\A\B}N,\C)
+G(D_\B D_\A N,\C)
-G(D_{D_\B\A}N,\C)\\=G(\hat R_{\A\B}N,\C),
\end{gather*}
thus $\hat R_{\A\B}N=0,\ \A,\B\in TM.$ Furthermore,
since $\hat R$ is the curvature of a hyper-K\"ahler manifold, it
has the property $\hat R(J_sv,J_sw)=\hat R(v,w),\ v,w\in TK.$
Hence,  $\hat R_{XN}N=\hat R_{J_sX,J_sN}N=0$ and $\hat
R_{J_iN,N}N=\hat R_{J_kN,J_jN}N=0,$ which completes the proof of
the theorem.
\end{proof}

\section{QC hypersurfaces in the flat hyper-K\"ahler  manifold $\Hnn$}\label{S:qc in Hn}

As usual, we consider the flat hyper-K\"ahler quaternion space $\Hnn$ with its
standard quaternionic structure $\mathcal Q=\text{span}\{I,J,K\},$
determined by the  multiplication on the right by $- i$, $-j$ and $- k$, respectively. Let  $$\langle q,q'\rangle=Re\ \left(\sum_{a=1}^{n+1}q_a\overline{q'_a}\right)$$  be the flat hyper-K\"ahler metric of $\Hnn$.
If $M$ is a qc-hypersurface of $\mathbb H^{n+1}$ and
$(A,\omega,q_0)\in GL(n+1,\mathbb H)\times Sp(1)\times \Hnn$,  then the
quaternionic affine map $F:\mathbb H^{n+1}\rightarrow \mathbb
H^{n+1}$, defined by $F(x)=Ax\bar \omega +q_0$, transforms $M$ into
another qc-hypersurface $F(M)$ of $\mathbb H^{n+1}$ since
$F$  preserves the quaternion structure of $\mathbb
H^{n+1}$. In this section we will prove, as another
application of Theorem~\ref{t:surface1}, that in fact any qc-hypersurface
of $\mathbb H^{n+1}$ is congruent by the action of the quaternion affine group
$GL(n+1,\mathbb H)\times Sp(1)\rtimes \Hnn$ to one of the standard examples: the quaternionic Heisenberg group, the round sphere or the qc-hyperboloid, see Example \ref{e:Heisenberg embed}, \eqref{ex3-formula} and \eqref{ex2-formula}, respectively.

\subsection{Proof of Theorem \ref{t:embedded in Hn}} Let ${\iota}:M\rightarrow \Hnn$ be
a qc-embedding, with $N$ and $II$  the unit normal and the
second fundamental form of $M$. Recall, we assume $II$  to be
negative definite on the maximal $J_s$-invariant distribution $H$
of $M$. From Theorem~\ref{t:surface1}, we obtain a calibrating function
$f$ on $M$ and a parallel, $J_s$-invariant section $\Delta$ of the
bundle $(T^*K\otimes T^*K)|_M$. Clearly, since $\Delta$ is parallel,
we can find an endomorphism of the vector space $\Hnn$, which we denote again by $\Delta$, such that
\begin{equation*}
\Delta(v,w)={\langle}\Delta(v),w\rangle,\qquad v,w\in\Hnn.
\end{equation*}
By \eqref{delta-def} in Theorem~\ref{t:surface1} and \eqref{df} we have the identities
\begin{equation*}
\Delta\circ J_s= J_s\circ\Delta,\quad
df(A)= \langle\Delta N,{\iota_*}(A)\rangle,\quad -fII(A,B)=
\langle\Delta\big({\iota_*}(\A)\big),{\iota_*}(\B)\rangle,\quad \A,\B\in TM.
\end{equation*}
Moreover,  formula \eqref{delta-def} from the of proof the theorem shows that, depending on the constant $S$,
we have exactly one of the following three cases: (i) $\Delta$ is positive definite; (ii) $\Delta$ is of signature $(4n,4)$, or (iii) $\Delta$ is degenerate of
signature $(4n,0)$. We begin with the last case.

Assume $\Delta$ is degenerate of signature $(4n,4)$ and $\ker \Delta=\{v_0,J_1v_0,J_2v_0,J_3v_0\}$
for some unit $v_0\in \Hnn$, so that $\Hnn=\text{im} \Delta\oplus\ker \Delta$ . We define the symmetric
endomorphism $\Delta{'}$ of $\Hnn$ which is inverse to $\Delta$ on $\text{im} \Delta$ and $\ker\Delta{'}=\ker\Delta$. Thus,
we have
$$\Delta\Delta{'}v=\Delta{'}\Delta v=v-{\langle}v,v_0\rangle v_0-\sum {\langle}v,J_sv_0\rangle J_sv_0,\quad v\in\Hnn.$$
Consider the functions $h,t_m,l_m: M\rightarrow \R$, $m=0,1,2,3$, defined by
\begin{gather*}
h(p)={\langle}\Delta{'} N,N\rangle,\quad t_0(p)={\langle}v_0,{\iota}(p)\rangle, \quad
t_s(p)={\langle}J_sv_0,{\iota}(p)\rangle,\quad l_0(p)={\langle}v_0,N\rangle,\quad
l_s(p)={\langle}J_sv_0,N\rangle.
\end{gather*}
We compute
\begin{multline*}
dl(\A)\ =\ {\langle}v_0,dN(\A)\rangle= \frac{1}{f}{\langle}v_0,[\Delta
{\iota_*}(\A)]_{TM}\rangle = \frac{1}{f}{\langle}v_0,\Delta {\iota_*}(\A)-{\langle}\Delta {\iota_*}(\A),N\rangle N\rangle\\ =
\frac{1}{f}{\langle}v_0,\Delta {\iota_*}(\A)- df(\A)N\rangle
=\ \frac{1}{f}{\langle}\Delta v_0, {\iota_*}(\A)\rangle\ -\ \frac{df(\A)}{f}l_0 =
-\frac{df(\A)}{f}l_0,
\end{multline*}
which implies that the product $f\,l_0$ is constant on $M$,
$f\,l_0=C_0,$ $C_0\in\R$.  Similarly we have $dl_s=-l_s\frac{df}{f}$
and therefore $f\,l_s=C_s,\ s=1,2,3,$ where $C_s$ are constants.
Furthermore,
\begin{multline*}
dh(\A)= 2\langle\Delta{'}N,dN(\A)\rangle=
\frac{2}{f}\langle\Delta{'}N,\Delta
{\iota_*}(\A)-df(\A)N\rangle = \frac{2}{f}{\langle}\Delta\Delta{'}N,{\iota_*}(\A)\rangle -
\frac{2hdf(\A)}{f}\\
=
- \frac{2hdf(\A)}{f}- \frac{2}{f}\sum_{m=0}^3 l_mdt_m(\A)
=- \frac{1}{f^2}\left\{2\sum_{m=0}^3 l_mdt_m(\A)+h\,
d(f^2)(\A)\right\}.
\end{multline*}
It follows that $f^2dh+hd(f^2)=-2\sum_{m=0}^3 C_m dt_m,$ which
implies that on the manifold $M$ we have
\begin{equation}\label{f^2h}
f^2h=c+\sum_{m=0}^3c_m t_m
\end{equation}
for some constants $c,\, c_m\in\R$, $m=0,\dots,3$.  Now, consider the vector valued function $V:M\rightarrow\Hnn,$
$$V(p)=f\Delta{'}N(p)+t_0(p)v_0+\sum_{s=1}^3 t_s(p)J_sv_0,\qquad p\in M.$$
Formula \eqref{f^2h} implies ${\langle}\Delta
V,V\rangle\ =\ f^2h\ =\ c +\sum_{m=0}^3 c_m t_m.$ On the other hand, we have
\begin{gather*}
({\iota}-V)_*={\iota_*}-df\Delta{'}N-f\Delta{'}\left(\frac{1}{f}\Delta({\iota_*})-
\frac{df}{f}N\right)-dt_0 \,v_0-\sum_{s=1}^3 dt_sJ_sv_0={\iota_*}-\Delta\Delta{'}{\iota_*}-dt_0\,v_0-\sum_{s=1}^3 dt_sJ_sv_0=0.
\end{gather*}
Thus, there exists a  point $O\in\Hnn,$ such that for all
$p\in M$ we have
$$\langle\Delta\left({\iota}(p)-O\right),{\iota}(p)-O\rangle=c+\sum_{m=0}^3 c_mt_m(p).$$  The functions $t_m$ are
restrictions of the real coordinate functions in $\R^{4n+4}\cong\Hnn$
corresponding to the fixed vectors $v_0,J_sv_0,$ hence, we can find a
quaternionic affine transformation of $\Hnn$, which maps ${\iota}(M)$
into the hypersurface $|q|^2+t=0$, cf. Example~\ref{e:Heisenberg embed}.

Proceeding similarly in the cases where $\Delta$ is positive
definite or of signature $(4n,4)$ we will obtain, respectively,
\begin{equation}\label{ex3-formula}
\sum_{a=1}^{n}|q_a|^2+|p|^2=1,
\end{equation}
i.e., the $4n+3$ dimensional round sphere in $\mathbb
R^{4n+4}=\mathbb H^{n+1}$
and the hyperboloid
\begin{equation}\label{ex2-formula}
\sum_{a=1}^{n}|q_a|^2-|p|^2=-1.
\end{equation}
In these two cases, however, a
simpler prove is possible, by first applying  an
appropriate transformation from the linear group $GL(n+1,\mathbb H)$, which transforms
$\Delta$ into a diagonal matrix with entries $+1$ or
$-1$. Then, the transformed hypersurface will be  totally umbilical, and  one can use the corresponding
classification theorem of totally umbilical hypersurfaces in $\Hnn$  to complete the proof.

\subsection{QC hypersurfaces in the quaternionic projective space{} ${\mathbb H}P^{n+1}$}
Note that, as a quaternionic manifold, $\mathbb H^{n+1}$ is
equivalent to an open dense subset of the quaternionic projective
space $\mathbb HP^{n+1}$.  Thus, all qc-hypersurfaces of
$\mathbb H^{n+1}$ are also qc-hypersurfaces of $\mathbb HP^{n+1}$. Also, it is well known that $PGL(n+2,\mathbb H)$ is the group of quaternionic transformations of  \cite{Ku} $\mathbb
HP^{n+1}$. As a direct consequence of Theorem~\ref{t:embedded in Hn} we obtain

\begin{cor}\label{surface4} If $M$ is a connected qc hypersurface of the
quaternionic projective space $\mathbb HP^{n+1}$ then there exists
a transformation $\phi\in GL(n+2,\mathbb H)$ of $\mathbb HP^{n+1}$
which transforms $M$ into an open set $\phi(M)$ of the
qc hypersurface $M_o$, defined by

$$M_o\ = \ \{[q_1,\dots,q_{n+2}]\in \mathbb HP^{n+1} \ :\ |q_1|^2+\dots+|q_{n+1}|^2=|q_{n+2}|^2\},$$
where $[q_1,\dots,q_{n+2}]$ denote the quaternionic
homogeneous coordinates of $\mathbb HP^{n+1}$.

In particular, as an abstract qc-manifold, every
qc-hypersurface of $\mathbb HP^{n+1}$ is qc-conformally equivalent to an open set
of the quaternionic  contact (3-Sasakian) sphere $S^{4n+3}$.
\end{cor}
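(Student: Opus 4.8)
The plan is to deduce Corollary~\ref{surface4} from Theorem~\ref{t:embedded in Hn} in two stages: first, to check that each of the three model hyperquadrics of $\Hnn$ appearing in that theorem projectivizes to $M_o$ up to the action of $GL(n+2,\mathbb H)$; and then to promote this chart-by-chart information to a statement about the whole connected hypersurface, using that $M_o$ is a nonsingular real algebraic hypersurface.

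For the first stage, recall that $\Hnn$ sits in $\mathbb{H}P^{n+1}$ as the standard affine chart $U_{n+2}=\{z_{n+2}\neq0\}$ via $(q_1,\dots,q_n,p)\mapsto[q_1:\dots:q_n:p:1]$, compatibly with the quaternionic structures; that the $n+2$ standard charts $U_1,\dots,U_{n+2}$ are interchanged by coordinate permutations lying in $GL(n+2,\mathbb H)$; and that every quaternionic affine transformation of $\Hnn$ is the restriction of an element of $GL(n+2,\mathbb H)$ (the translations and the $GL(n+1,\mathbb H)$-part extend through block-upper-triangular matrices, and the right multiplications $x\mapsto x\bar\omega$, $|\omega|=1$, through $\mathrm{diag}(I_{n+1},\omega)$). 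Homogenizing the three hyperquadrics of Theorem~\ref{t:embedded in Hn} via $q_i=z_iz_{n+2}^{-1}$, $p=z_{n+1}z_{n+2}^{-1}$ and clearing denominators yields, respectively,
\begin{equation*}
\sum_{i=1}^n|z_i|^2+\mathrm{Re}\bigl(z_{n+1}\bar z_{n+2}\bigr)=0,\qquad
\sum_{i=1}^n|z_i|^2+|z_{n+1}|^2=|z_{n+2}|^2,\qquad
\sum_{i=1}^n|z_i|^2+|z_{n+2}|^2=|z_{n+1}|^2 .
\end{equation*}
The middle equation defines $M_o$; the right one is carried to it by the transposition $z_{n+1}\leftrightarrow z_{n+2}$; and the left one, after the $\mathbb H$-linear change $w_i=z_i$ $(i\le n)$, $w_{n+1}=\tfrac12(z_{n+1}+z_{n+2})$, $w_{n+2}=\tfrac12(z_{n+1}-z_{n+2})$, becomes $\sum_{i=1}^{n+1}|w_i|^2=|w_{n+2}|^2$, since $\mathrm{Re}(z_{n+1}\bar z_{n+2})=|w_{n+1}|^2-|w_{n+2}|^2$. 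So for each model there is $\psi\in GL(n+2,\mathbb H)$ taking its projective closure onto $M_o$; and as $M_o$ contains no point with $z_{n+2}=0$, it lies entirely in $U_{n+2}$, where it is the unit sphere of $\Hnn$, so $M_o\cong S^{4n+3}$ and its induced qc-structure lies in the qc-conformal class of the standard $3$-Sasakian structure.

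For the second stage, let $M\subset\mathbb{H}P^{n+1}$ be a connected qc-hypersurface. For a standard chart $U_\alpha\cong\Hnn$ and a connected component $C$ of the open set $M\cap U_\alpha$, the flat connection of $U_\alpha$ is torsion-free and preserves the quaternion bundle, so by the Remark after Definition~\ref{d:QRhypersurface} the second fundamental form of $C$ remains definite on $H$, and $C$ is a connected qc-hypersurface of $\Hnn$. Combining Theorem~\ref{t:embedded in Hn} with the first stage produces $\phi_C\in GL(n+2,\mathbb H)$ with $C\subset Z_C:=\phi_C^{-1}(M_o)$; since the quadratic form defining $M_o$ is nondegenerate, $Z_C$ is a nonsingular, connected, real-analytic hypersurface diffeomorphic to $M_o\cong S^{4n+3}$, and $C$, a full-dimensional submanifold, is open in $Z_C$. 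The connected open sets $\{C\}$ cover $M$, so their nerve is connected; and whenever $C\cap C'\neq\emptyset$, this intersection is a nonempty open subset of both $Z_C$ and $Z_{C'}$, so by unique continuation for real-analytic functions the polynomial defining $Z_C$ vanishes on all of $Z_{C'}$, forcing $Z_C=Z_{C'}$. Chaining through the nerve, all $Z_C$ coincide with one $Z=\phi^{-1}(M_o)$, whence $M=\bigcup_C C\subset Z$; as $M$ and $Z$ are manifolds of the same dimension, $\phi(M)$ is an open subset of $M_o$, and restricting to $U_{n+2}$ exhibits it as an open subset of the $3$-Sasakian $S^{4n+3}$. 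The last assertion then follows since $\phi$ preserves qc-conformal classes. I expect the final gluing to be the main obstacle — i.e.\ replacing the chart-dependent transformations of Theorem~\ref{t:embedded in Hn} by a single $\phi$ — but it rests only on the connectedness of $M$ and the rigidity of the algebraic hypersurface $M_o$, while everything else is a direct computation or an immediate consequence of Theorem~\ref{t:embedded in Hn}.
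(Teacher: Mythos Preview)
Your proof is correct and follows the paper's approach: reduce to Theorem~\ref{t:embedded in Hn} in an affine chart, then observe that the three model hyperquadrics are $GL(n+2,\mathbb H)$-congruent to $M_o$ in $\mathbb HP^{n+1}$. The paper's argument is a two-line sketch asserting only this congruence, so your explicit homogenization and your real-analytic gluing step (to upgrade the chart-local transformations to a single global $\phi$) fill in details the paper leaves implicit.
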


\begin{proof}
Theorem~\ref{t:embedded in Hn} gives a description of the
qc-hypersurfaces of $\mathbb HP^{n+1}$. Noting that the three quadrics in Theorem \ref{t:embedded in Hn} are congruent modulo
the $GL(n+2,\mathbb H)$ action on the projective space $\mathbb
HP^{n+1}$ completes the proof.
\end{proof}

{\bf Acknowledgments.} S.I. is partially supported by Contract
168/2014 with the Sofia University "St.Kl.Ohridski". I.M. is partially supported by a SoMoPro II Fellowship which is co-funded  by
the European Commission from \lq\lq{}People\rq\rq{} specific programme (Marie Curie Actions) within the EU
Seventh Framework Programme on the basis of the grant agreement REA No. 291782. It is further co-financed by the South-Moravian Region. DV was partially supported by Simons Foundation grant \#279381.

\section{Appendix}

\begin{lemma}\label{l:admis-lemma} Let $(M,H)$ be a qc manifold and
 $(\eta_s,I_s,g)$,  $(\eta'_s,I'_s,g')$ be two local qc-structures on an open set $U\subset M$ with the same horizontal space $H$.
 Then, there exist a positive function $\mathcal
F:U\rightarrow R,\ \mathcal F>0$ and a matrix-valued function
$\mathcal A=(a_{ij}):U\rightarrow SO(3)$ such that
$$(I'_1,I'_2,I'_3)=(I_1,I_2,I_3)\mathcal A,\quad
(\eta'_1,\eta'_2,\eta'_3)=\mathcal F(\eta_1,\eta_2,\eta_3)\mathcal
A,\quad g'=\mathcal Fg.$$
\end{lemma}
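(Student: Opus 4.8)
The plan is to reduce the statement to a pointwise linear-algebra fact on the common horizontal space $H$ and then settle it there. Since $H=\bigcap_s\ker\eta_s=\bigcap_s\ker\eta'_s$, the $\R^3$-valued $1$-forms $(\eta_1,\eta_2,\eta_3)$ and $(\eta'_1,\eta'_2,\eta'_3)$ are two smooth local coframes of the rank-three annihilator bundle $\mathrm{Ann}(H)\subset T^*M$ over $U$, so there is a unique smooth $GL(3,\R)$-valued function $\mathcal A_0$ on $U$ with $(\eta'_1,\eta'_2,\eta'_3)=(\eta_1,\eta_2,\eta_3)\mathcal A_0$. Differentiating this relation and restricting to $H$, where every $\eta_j$ vanishes (so that the terms differentiating the entries of $\mathcal A_0$ drop out), the compatibility conditions $2g(I_sX,Y)=d\eta_s(X,Y)$ and $2g'(I'_sX,Y)=d\eta'_s(X,Y)$, $X,Y\in H$, give for the fundamental $2$-forms $\omega_s=g(I_s\cdot,\cdot)$ and $\omega'_s=g'(I'_s\cdot,\cdot)$ the identity $(\omega'_1,\omega'_2,\omega'_3)=(\omega_1,\omega_2,\omega_3)\mathcal A_0$ on $H$. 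From here on I fix $p\in U$ and work on the Euclidean space $H_p$.

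Next I would produce $\mathcal F$ and the orthogonality of $\mathcal A_0/\mathcal F$ by comparing top exterior powers. For any unit $c\in\R^3$ the endomorphism $J_c:=\sum_s c_sI_s$ of $H_p$ satisfies $J_c^2=-\mathrm{Id}$ and is $g$-orthogonal, with fundamental $2$-form $\sum_s c_s\omega_s$; hence $\bigl(\sum_s c_s\omega_s\bigr)^{\wedge 2n}=\varepsilon\,(2n)!\,\mathrm{vol}_g$, where $\mathrm{vol}_g$ is the volume form of $g$ for a fixed orientation of $H_p$ and $\varepsilon=\pm 1$ is a continuous (hence constant) $\{\pm1\}$-valued function of $c$ on the connected sphere $S^2$. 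By homogeneity $\bigl(\sum_s c_s\omega_s\bigr)^{\wedge 2n}=\varepsilon\,|c|^{2n}(2n)!\,\mathrm{vol}_g$ for all $c\in\R^3$, and likewise $\bigl(\sum_s b_s\omega'_s\bigr)^{\wedge 2n}=\varepsilon'\,|b|^{2n}(2n)!\,\mathrm{vol}_{g'}$ for the primed structure. Because $\sum_s b_s\omega'_s=\sum_j(\mathcal A_0 b)_j\,\omega_j$ (with $b$ a column vector), comparing the two evaluations of its $2n$-th power forces $\varepsilon=\varepsilon'$ and $|\mathcal A_0 b|^{2n}\mathrm{vol}_g=|b|^{2n}\mathrm{vol}_{g'}$ for every $b$; thus $|\mathcal A_0 b|=\mathcal F|b|$ for all $b$, where $\mathcal F:=(\mathrm{vol}_{g'}/\mathrm{vol}_g)^{1/(2n)}>0$ is a smooth positive function on $U$. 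Therefore $\mathcal A_0^{\,T}\mathcal A_0=\mathcal F^2\,\mathrm{Id}$, i.e.\ $\mathcal A:=\mathcal F^{-1}\mathcal A_0$ takes values in $O(3)$.

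It then remains to identify the metrics and the complex structures and to improve $O(3)$ to $SO(3)$ using a single quaternion relation. Write $\widetilde I_s:=\sum_j(\mathcal A)_{js}I_j$ and $\widetilde\omega_s:=g(\widetilde I_s\cdot,\cdot)$, so $\omega'_s=\mathcal F\widetilde\omega_s$. Since the columns of $\mathcal A\in O(3)$ are orthonormal, the $\widetilde I_s$ are $g$-orthogonal, satisfy $\widetilde I_s\widetilde I_t+\widetilde I_t\widetilde I_s=-2\delta_{st}\mathrm{Id}$, and obey $\widetilde I_1\widetilde I_2=(\det\mathcal A)\,\widetilde I_3$ (the product of two orthonormal imaginary quaternions $u,v$ equals $u\times v$, and for the columns of $\mathcal A$ one has $\mathrm{col}_1\times\mathrm{col}_2=(\det\mathcal A)\,\mathrm{col}_3$). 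Let $G$ be the $g$-self-adjoint positive operator with $g'=g(G\cdot,\cdot)$; then $\omega'_s=\mathcal F\widetilde\omega_s$ reads $GI'_s=\mathcal F\widetilde I_s$, i.e.\ $I'_s=Q\widetilde I_s$ with $Q:=\mathcal F G^{-1}$ self-adjoint positive. Substituting $I'_s=Q\widetilde I_s$ into the relation $I'_1I'_2=I'_3$ of the qc-structure $(\eta'_s,I'_s,g')$ gives $\widetilde I_1 Q\widetilde I_2=\widetilde I_3=(\det\mathcal A)\,\widetilde I_1\widetilde I_2$, and cancelling the invertible operators $\widetilde I_1,\widetilde I_2$ yields $Q=(\det\mathcal A)\,\mathrm{Id}$. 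As $Q$ is positive, $\det\mathcal A=1$ and $Q=\mathrm{Id}$; hence $\mathcal A$ is $SO(3)$-valued, $G=\mathcal F\,\mathrm{Id}$ (so $g'=\mathcal F g$), and $I'_s=\widetilde I_s$, i.e.\ $(I'_1,I'_2,I'_3)=(I_1,I_2,I_3)\mathcal A$. Combined with $(\eta'_1,\eta'_2,\eta'_3)=\mathcal F(\eta_1,\eta_2,\eta_3)\mathcal A$ this is the assertion.

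The only genuinely non-routine step is the middle one: that the change-of-coframe matrix $\mathcal A_0$ acting on the defining $1$-forms is forced, at every point, to be a positive multiple of an orthogonal matrix. What makes it work is that for a hyperhermitian structure any real combination $\sum_s c_s\omega_s$ of the fundamental $2$-forms is again, up to scale, the fundamental $2$-form of a compatible complex structure, so its top exterior power is a fixed multiple of the volume form; comparing these powers for the two structures converts the linear relation between the $\omega'_s$ and the $\omega_s$ into a norm identity for $\mathcal A_0$. After that, the single relation $I'_1I'_2=I'_3$ finishes everything — and simultaneously rules out $\det\mathcal A=-1$ — while the bundle-level facts (coincidence of the annihilators and smoothness of $\mathcal A_0$, $\mathcal F$, $\mathcal A$) are immediate.
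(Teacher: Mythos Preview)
Your argument is correct, and it takes a genuinely different route from the paper's proof. The paper first shows that the $I'_s$ lie in $\mathrm{span}\{I_1,I_2,I_3\}$ by observing that the composition $\big((d\eta'_j)|_H\big)^{-1}\circ (d\eta'_i)|_H$ is a well-defined endomorphism of $H$ independent of the auxiliary metric used to raise indices: computing it with $h=g'$ gives $I'_k$, while computing it with $h=g$ lands it in $\mathrm{span}\{\mathrm{id},I_1,I_2,I_3\}$; the relation $(I'_k)^2=-\mathrm{id}$ then forces it into the imaginary part. From there the paper uses anti-commutation to get $g'=\mathcal F g$, and the $SO(3)$ property is deduced last from the quaternion identities. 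You instead bypass the metric-independence trick entirely and go through top exterior powers: the identity $\big(\sum_s c_s\omega_s\big)^{2n}=\varepsilon\,|c|^{2n}(2n)!\,\mathrm{vol}_g$ immediately forces $\mathcal A_0$ to be conformal, and then the single relation $I'_1I'_2=I'_3$ simultaneously pins down $g'=\mathcal F g$, $I'_s=\widetilde I_s$, and $\det\mathcal A=1$. Your approach is arguably more elementary---it replaces the somewhat subtle ``metric-independent endomorphism'' observation with a transparent volume comparison---while the paper's approach has the virtue of identifying the quaternionic bundle $\mathbb Q'=\mathbb Q$ before touching the metrics at all.
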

\begin{proof}
Since, by definition, $(\eta_1,\eta_2,\eta_3)$ and
$(\eta'_1,\eta'_2,\eta'_3)$ are frames of the bundle $L^*$, there
exists a matrix-valued function  $\mathcal A=(a_{ij}):U\rightarrow
GL(3)$ with $\eta'_s=\sum_t a_{st}\eta_t,\ s=1,2,3.$ Taking the
exterior derivative of the above equations we get
\begin{equation}\label{ad-lemma1}
(d\eta'_s)|_H=\sum_t a_{st}(d\eta_t)|_H,\ s=1,2,3.
\end{equation}
Let us pick any symmetric and positive definite section $h$ of the
bundle $H^*\otimes H^*$ which we will refer to simply as a metric
on $H.$ With respect to this metric one may consider the
restrictions of the 2-forms $(d\eta'_s)|_H,\ s=1,2,3$ to $H$ as
endomorphisms of $H,$ i.e. sections of the bundle
$End(H)=H^*\otimes H.$ Of course, this identification depends
strongly on the choice of $h$. However, it is easy to see that the
compositions of two endomorphisms of the form
$((d\eta'_s)|_H)^{-1}\circ(d\eta'_t)|_H,\ s=1,2,3$ produces
endomorphisms independent of the choice of $h$. Let us take
$(i,j,k)$ to be any cyclic permutation of $(1,2,3)$. If $h=g'$
then we get
\begin{equation}
((d\eta'_j)|_H)^{-1}\circ(d\eta'_i)|_H=I'_k.
\end{equation}
The above equation needs to hold for any choice of the
metric $h$ on $H$, in particular, also for $h=g$. Using \ref{ad-lemma1},
we conclude that
$$I'_k\ =\ ((d\eta'_j)|_H)^{-1}\circ(d\eta'_i)|_H\ \in\  \text{span}_{\mathbb R}\ \{id_H,I_1,I_2,I_3\}.$$

\noindent Note that $\text{span}_{\mathbb R}\ \{id_H,I_1,I_2,I_3\}\subset
End(H)$ is an algebra with respect to the usual composition of
endomorphisms, which is isomorphic to the algebra of the
quaternions $$\mathbb H=\text{span}_{\mathbb R}\ \{1,i,j,k\}.$$ If
an element of $\mathbb H$ has square $-1$ then this element
belongs to $Im(\mathbb H).$ Therefore $I'_s\in
Q=\text{span}\{I_1,I_2,I_3\},\ s=1,2,3$ and thus
$$\text{span}_{\mathbb R}\ \{I_1,I_2,I_3\}\ =\ \text{span}_{\mathbb R}\ \{I'_1,I'_2,I'_3\}.$$
Now, if we keep identifying $H^*\otimes  H$ with $End(H)$ by
using $h=g$, then, since the metric $g$ is $I_s$- and
$I'_s$-compatible, each of the endomorphisms $(d\eta_k')_H\in
End(H)$ anti-commutes with both $I'_i$ and $I'_j.$ This implies
that, as an endomorphism, $(d\eta_k')_H$ is proportional to
$I'_k$, and hence $g'=\mathcal Fg$ for some $\mathcal
F>0.$ The fact that $\mathcal A=(a_{ij})$ takes values in
$SO(3)$ follows from the requirement that both  $(I_1,I_2,I_3)$
and $(I'_1,I'_2,I'_3)$ satisfy the quaternionic identities.

\end{proof}

\end{document}